\documentclass{birkjour}
\usepackage[dvipsnames]{xcolor}
\usepackage{amsfonts,amssymb,amsmath,mathtools,hyperref}
\usepackage[english]{babel}
\usepackage{bigints}
\usepackage{amssymb}
\usepackage{pdfsync}
\usepackage[normalem]{ulem}
\usepackage{cancel}

\usepackage{fixltx2e}
\usepackage[all,error]{onlyamsmath}

\usepackage{amssymb,amsfonts,amsmath,
amsrefs,enumerate,url,hyperref,
mathbbol,dsfont}
\usepackage{mathrsfs}
\usepackage{tikz}
\usetikzlibrary{quotes,angles}
\usetikzlibrary{arrows.meta}

\tikzset{%
  >=latex, % option for nice arrows
  inner sep=0pt,%
  outer sep=2pt,%
  mark coordinate/.style={inner sep=0pt,outer sep=0pt,minimum size=3pt,
    fill=black,circle}%
}
\usepackage[T1]{fontenc}
\usepackage[cp1250]{inputenc}

\usepackage[strict=true]{csquotes}

\numberwithin{equation}{section}

\usepackage{relsize,bigints}

\usepackage{graphicx,tikz}

\usepackage{color}
\newtheorem{thm}{Theorem}[section]
\newtheorem{lemma}[thm]{Lemma}
\newtheorem{proposition}[thm]{Proposition}

\newtheorem{remark}[thm]{Remark}

\newcommand{\eps}{\varepsilon}

\DeclareMathAlphabet{\mathpzc}{OT1}{pzc}{m}{it}

\newcommand{\ka}{{\mathpzc k}}
\newcommand{\kad}{{\mathpzc K}}

\newcommand{\pol}{\tfrac 12}

\newcommand{\mquad}[1]{\qquad \text{#1} \qquad}
\newcommand{\mqquad}[1]{\quad \text{#1} \quad}

\newcommand{\fir}{ f_i^{\mathpzc r}}

\newcommand{\e}{\mathrm e}
\newcommand{\ud }{\, \mathrm d}

\newcommand{\sem}[1]{\left ( \e^{t#1} \right)_{t \ge 0}}
\newcommand{\dom}[1]{\mathcal D(#1)}
\newcommand{\grae}{\lim_{\eps \to 0}}
\newcommand{\lam}{\lambda}
\newcommand{\rla}{R_\lam} 
\newcommand{\rlae}{R_{\lam,\eps}}

\newcommand{\jcg}[1]{\left (#1 \right )_{n\ge 1}} 
\newcommand{\gra}{\lim_{n\to \infty}}

\newcommand{\rez}[1]{\left (\lam - #1\right )^{-1}}

\newcommand{\coss}{\mbox{$C(S)$}}
\newcommand{\ie}{i.e., }
\newcommand{\czn}{\int_0^\infty}
\newcommand{\sui}{\sum_{i\in \kad}}
\newcommand{\cadlag}{c\`adl\`ag}
\newcommand{\seq}[1]{\left ( #1\right )_{i\in \kad}} 
\newcommand{\limx}{\lim_{x\to \infty}}

\newcommand{\rod}[1]{\mbox{$\left (#1 \right )_{t\ge 0}$}} 
\newcommand{\slam}{\sqrt{2\lam}}
\newcommand{\slem}{\sqrt{\tfrac 2\lam}}

\usepackage{graphicx,tikz}
\usetikzlibrary{arrows}
\definecolor{ngreen}{HTML}{006400}

\DeclareMathAlphabet{\mathpzc}{OT1}{pzc}{m}{it}

\newcommand{\jeden}{\mathbf 1}

\newcommand{\elam}{\ell_\lam} 
\newcommand{\elame}{\ell_{\lam,\eps}} 
\newcommand{\zero}{\mathpzc {o}}
\title[]{Perturbing Walsh's spider process by \\ infinitesimal drifts}

\author[A. Bobrowski]{Adam Bobrowski}
\address{Lublin University of Technology\\Department of Mathematics\\Nadbystrzycka 38A\\20-618 Lublin, Poland}
\email{a.bobrowski@pollub.pl}
\author[A. Pilipenko]{Andrey Pilipenko}
\address{University of Geneva, 
Section de math\'ematiques\\
Case postale 64\\
1211 Geneva 4, Switzerland\\ \\
Institute of Mathematics of the Ukrainian National Academy of Sciences\\
 Tereschenkivska st. 3, Kiev-4, 01601, Ukraine}
\email{Andrey.Pilipenko@unige.ch} 
\author[E. Ratajczyk]{El\.zbieta Ratajczyk}
\address{
Lublin University of Technology\\
Nadbystrzycka 38A\\
20-618 Lublin, Poland}

\email{e.ratajczyk@pollub.pl}

\begin{document}

\begin{abstract}We consider Walsh's process on a star graph perturbed by the fact that its paths need to additionally pass through semi-permeable soft membranes, each located on a different edge at a small distance from the center, and each having its own permeability coefficient. We describe  the limit processes as the distances converge to $0$, in terms of  the characteristics of the approximating ones. The main finding is that the limit process is still of Walsh type, but its parameters need to be appropriately modified. The paper is a close companion of \cites{onportenko,withvillmart}. \end{abstract}

\newcommand{\cer}{C[-\infty, \infty]}
\newcommand{\cerp}{C[0, \infty]}

\newcommand{\R}{\mathbb R} 
\newcommand{\gep}{\mathfrak G_\eps}
\newcommand{\gepo}{\mathfrak G_\eps^0}
\newcommand{\gen}{\mathfrak H_{\alpha,\beta}}
\newcommand{\genm}{\mathfrak H_{\wt \alpha,\wt \beta}}
\newcommand{\qep}{q_\eps}
\newcommand{\pep}{p_\eps}
\newcommand{\aep}{A_\eps}
\newcommand{\bep}{B_\eps}
\newcommand{\cem}{C_\eps^-}
\newcommand{\cep}{C_\eps^+}
\newcommand{\pepi}{p_{\eps,i}}
\newcommand{\abs}{\mathfrak H_{0,\infty}}

\maketitle

\newcommand{\wt}{\widetilde}
\section{Introduction} 
Walsh's spider process on a star graph with $\ka$ edges is a natural generalization of the skew Brownian motion. The latter has been introduced in the It\^o and McKean's book \cite{ito} as a modification of the standard one-dimensional Wiener process in which the sign of each excursion from $0$ is chosen to be positive with probability, say, $\alpha \in [0,1]$, and negative with probability $1-\alpha$. The process of Walsh can  be constructed similarly, and is characterized by $\ka$ non-negative parameters $\alpha_i, i = 1,\dots, \ka $ such that  $\sum_{i=1}^\ka  \alpha_i=1$; each excursion of the Brownian motion from $0$ is chosen to lie on the $i$th edge with probability $\alpha_i$. 

Interestingly, as shown by Portenko \cite{portenko,portenkokniga}, the skew Brownian motion can be seen as a process with singular drift at $0$. To wit, Portenko has proved that the skew Brownian motion can be obtained in the limit, as $\eps \to 0$,  of diffusions generated by the operators 
\[ \tfrac 12 \tfrac {\ud^2}{\ud x^2} + a_\eps \tfrac {\ud}{\ud x} \]
with drift coefficients concentrating around $0$, as, for instance,  in the example where $a_\eps (x) = \eps^{-1} a(\eps^{-1}x), x \in \R$ for a continuous integrable $a\colon \R \to \R$. Recently, in  \cite{onportenko}, Portenko's result has been extended to the case of Walsh's spider: we have shown that as a result of a singular perturbation of Walsh's process by drifts that concentrate more and more around the graph center as above, one obtains Walsh's process with transformed parameters. 

In this paper we take up the task of studying the related scenario in which Walsh's process is singularly perturbed by \emph{infinitesimal} drifts. To elaborate on this succinct statement, we recall that the skew Brownian motion can alternatively be seen as a process passing through a soft membrane located at $0$ (see e.g. \cite{bookandrey}). Our approximation involves $\ka$ membranes of this type, each located on a different edge of the star graph at a distance $c_i\eps>0 $ from its center, where $c_i>0, i=1,\ldots,k$ are given (see Figure \ref{rys1}). We study the limit as $\eps \to 0$, and describe the limit process in terms of the parameters characterizing the approximating membranes. We prove that such a~perturbation of the Walsh process is also a   Walsh process but with different parameters.

Before closing this introduction we note that our findings agree with those of the companion paper \cite{withvillmart}, where the case of $\ka =2$ was studied by other methods; see also \cite{makhnomem}.
Furthermore, just before sending our paper to the journal we discovered \cite{koralov2026}, where a criterion for convergence of diffusions on star graphs is discussed in terms of  scale functions and speed measures; by contrast, our approach is based on the classical theory of convergence of semigroups. 

\begin{figure}
  \begin{tikzpicture}[scale=1]
    \draw [thick] plot  coordinates {(0,0) (2,0)};
    \draw [thick] plot  coordinates {(0,0) (2*0.309,2*0.951)};
    \draw [thick] plot  coordinates {(0,0) (2*0.309,2*-0.951)};
    \draw [thick] plot  coordinates {(0,0) (-2*0.809,2*0.587)};
    \draw [thick] plot  coordinates {(0,0) (-2*0.809,-2*0.587)};
\draw[thick,densely dashdotted, RedOrange , opacity=0.8] (0,0) circle (0.1);
\draw[ densely dashdotted, RedOrange, opacity=0.6] (0,0) circle (0.15);
\node [below] at  (-0.1,-0.1) {\footnotesize{{$\zero$}}}; 
    \fill[black] (1,0) circle (1.7pt);
    \fill[NavyBlue] (1,0) circle (0.8pt);
\draw[thick,densely dashdotted, NavyBlue, opacity=0.8] (1,0) circle (0.1);
\draw[densely dashdotted, NavyBlue, opacity=0.4] (1,0) circle (0.15);
    \fill[black] (0.7*0.309,0.7*0.951) circle (1.7pt);
    \fill[NavyBlue] (0.7*0.309,0.7*0.951) circle (0.8pt);
\node [right] at  (1*0.309,0.7*0.951) {\small{{$c_2\varepsilon$}}}; 
\draw[thick,densely dashdotted, NavyBlue, opacity=0.8] (0.7*0.309,0.7*0.951) circle (0.1);
\draw[densely dashdotted, NavyBlue, opacity=0.4] (0.7*0.309,0.7*0.951) circle (0.15);
    \fill[black] (1.5*0.309,-1.5*0.951) circle (1.7pt);
    \fill[NavyBlue] (1.5*0.309,-1.5*0.951) circle (0.8pt);
\draw[thick,densely dashdotted, NavyBlue, opacity=0.8] (1.5*0.309,-1.5*0.951) circle (0.1);
\draw[densely dashdotted, NavyBlue, opacity=0.4] (1.5*0.309,-1.5*0.951) circle (0.15);
\node [left] at  (1.3*0.309,-1.55*0.951) {\small{{$c_5\varepsilon$}}}; 
    \fill[black] (-1.3*0.809,1.3*0.587) circle (1.7pt);
    \fill[NavyBlue] (-1.3*0.809,1.3*0.587) circle (0.8pt);
\draw[thick,densely dashdotted, NavyBlue, opacity=0.8] (-1.3*0.809,1.3*0.587) circle (0.1);
\draw[densely dashdotted, NavyBlue, opacity=0.4] (-1.3*0.809,1.3*0.587) circle (0.15);
\node [below] at  (-1.3*0.809,1.05*0.587) {\small{{$c_3\varepsilon$}}}; 
\node [below] at  (1,-0.15) {\small{{$c_1\varepsilon$}}}; 
    \fill[black] (-1*0.809,-1*0.587) circle (1.7pt);
    \fill[NavyBlue] (-1*0.809,-1*0.587) circle (0.8pt);
\draw[thick,  densely dashdotted, NavyBlue, opacity=0.8] (-1*0.809,-1*0.587) circle (0.1);
\draw[densely dashdotted, NavyBlue, opacity=0.4] (-1*0.809,-1*0.587) circle (0.15);
\node [below] at  (-1*0.809,-1.25*0.587) {\small{{$c_4\varepsilon$}}}; 
    \fill[black] (0,0) circle (1.7pt);
    \fill[RedOrange] (0,0) circle (0.8pt);
 \draw [thick] plot  coordinates {(4.2,0-1) (5.8,0-1)};
    \fill[black] (5,0-1) circle (1.7pt); 
    \fill[NavyBlue] (5,1.4) circle (0.8pt); 
\draw[thick,  densely dashdotted, NavyBlue, opacity=0.8] (5,0-1) circle (0.1);
\draw[densely dashdotted, NavyBlue, opacity=0.4] (5,0-1) circle (0.15);
\node [below] at  (5,0.45-1) {\small{{$c_i \varepsilon$}}}; 
\draw [NavyBlue,densely dashdotted,{Classical TikZ Rightarrow[scale=1]}- ,  thick] (4.5,-0.3-1) -- (5,-0.3-1);
\draw [NavyBlue,densely dashdotted,{Classical TikZ Rightarrow[scale=1]}- ,  thick] (5.5,-0.3-1) -- (5,-0.3-1);
    \fill[NavyBlue] (5,-0.3-1) circle (1.5pt);
\node [below] at  (5.6,-0.46-1) {\scriptsize{\textcolor{NavyBlue}{$p_{\varepsilon,i}$}}}; 
\node [below] at  (4.3,-0.42-1) {\scriptsize{\textcolor{NavyBlue}{$1-p_{\varepsilon,i}$}}}; 
   \draw [thick] plot  coordinates {(5,1.4) (5+0.4,1.4)};
    \draw [thick] plot  coordinates {(5,1.4) (5+0.4*0.309,1.4+0.4*0.951)};
    \draw [thick] plot  coordinates {(5,1.4) (5+0.4*0.309,1.4+0.4*-0.951)};
    \draw [thick] plot  coordinates {(5,1.4) (5-0.4*0.809,1.4+0.4*0.587)};
    \draw [thick] plot  coordinates {(5,1.4) (5-0.4*0.809,1.4-0.4*0.587)};
    \fill[black] (5,1.4) circle (1.7pt);
\draw[thick,densely dashdotted, RedOrange , opacity=0.8] (5,1.4) circle (0.1);
\draw[ densely dashdotted, RedOrange, opacity=0.6] (5,1.4) circle (0.15);
   \fill[RedOrange] (5,1.4) circle (0.8pt);
\node [below] at  (5-0.1,1.4-0.1) {\footnotesize{{$\zero$}}}; 
  \node[above] at (5,0.1) 
    {\scriptsize{$\displaystyle\sum_{i=1}^{\ka} \alpha_i f_i'(0)= \tfrac \beta 2 f''(\zero)$}};
%  \node[below] at (5,-2)   {\scriptsize{$ (1-\pepi) f'_i(\eps - )=\pepi f_i'(\eps+)$}};
  \end{tikzpicture}

\caption{\footnotesize{The star graph $S\coloneqq K_{1,\ka}$ with $\ka=5$ edges and soft semipermeable membranes located close to its center $\zero$. The behavior of the process at the graph center is described by the boundary condition in the upper right (see Section \ref{sec:tmr} for details). The  membrane at the $i$th edge is located at the $c_i \eps$ distance from $\zero$, and has its own permeability coefficient $\pepi$ which depends on $\eps>0$ and $i=1,\dots, \ka$; this coefficient describes the tendency of the membrane to reflect the process away from $\zero$. }}\label{rys1} 
\end{figure}

\section{Preliminaries} \label{sec:prelim}

\subsection{Feller semigroups and their convergence}

Let $S$ be a compact metric space, and $\coss$ be the space of continuous functions on $S$.  A Feller semigroup  (see e.g. \cites{bass,ethier,kallenbergnew}) 
is a strongly continuous family of positive contraction operators $\rod{T(t)}$  in $\coss$ such that $T(0)=I_{\coss}$ (the identity operator) and 
\[ T(t)T(s) = T(t+s) \qquad s,t \ge 0.\]
Because of the Riesz representation theorem, for each $x\in S $ and $t\ge 0$ there is a Borel measure $\textsf m_{x,t}$ on $S$ such that $\textsf m_{x,t} (S)\le 1$ and 
\begin{equation}T(t)f (x) = \int_S f \ud \textsf m_{x,t}, \qquad f \in \coss; \label{intro:1} \end{equation}
if $\textsf m_{x,t}(S) = 1$ for all $x$ and $t$, the process is said to be conservative or honest.

It is well-known that with each Feller semigroup one can associate a~Markov  process \rod{X(t)} with c\`adl\`ag paths \cite{ethier,kallenbergnew,rosen,rogers} (\ie paths that are right-continuous and possess left limits) in such a way that 
\begin{equation} \label{intro:2} T(t)f(x) = \mathsf E_x  f(X(t))\mathsf 1_{\{t< \tau\}} , \qquad  f\in \coss, t\ge 0, x\in S\end{equation}
where $\mathsf E_x $ denotes expectation conditional on $X(0)=x$ and $\tau$ is the lifetime of the process, that is, $\tau=\tau (\omega)$ is the random time up to which the path $X(t,\omega)$ is defined. Comparing Eqs. \eqref{intro:1} and \eqref{intro:2} we see that $\textsf m_{x,t}(\Gamma),$ {for a Borel set} $ \Gamma \subset S$ should be interpreted as the probability that  \rod{X(t)}  starting at $x$ at time $0$ will be in $\Gamma$ at a time $t\ge 0$. The measures $\textsf m_{x,t}$, together with the Markov property (which translates into Chapman--Kolmogorov equations), determine finite-dimensional distributions of \rod{X(t)}. If the distribution of $X(0)$ is specified, these finite-dimensional distributions, in turn, when combined with the requirement that the paths are \cadlag ,  
determine the distribution of \rod{X(t)} in the Skorokhod space $D([0,\infty), S)$.

Feller semigroups are conveniently described by their generators. The generator $\mathfrak A$ of a Feller semigroup is defined by 
\[\mathfrak A f = \lim_{t\to 0} t^{-1} (T(t)f - f) \]
on the domain $\dom{\mathfrak A }$ composed of $f$ such that the above strong limit exists; that is, the right-hand side converges to $\mathfrak A f$ uniformly on $S$. It is well-known that the generator $\mathfrak A$ characterizes the semigroup uniquely; in particular, different semigroups have different generators.  Moreover, an operator $\mathfrak A$ is the generator of a Feller semigroup (shortly: a Feller generator) in $\coss$ iff the following three conditions are met %the following three conditions are satisfied: 
\begin{itemize}
\item [1. ] $\mathfrak A$ is densely defined, 
\item [2. ] $\mathfrak A$ satisfies the positive-maximum principle,
\item [3. ] $\mathfrak A$ satisfies the range condition: for any $g \in \coss$ and $\lam > 0$ there is an $f \in \dom{\mathfrak A }$ such that $\lam f - \mathfrak A f = g.$ \end{itemize} 
The semigroup generated by $\mathfrak A$ will in what follows be denoted $\sem{\mathfrak A}.$ The solution $f$ to the resolvent equation is denoted $\rez{\mathfrak A} g$, and the family of operators $\rez{\mathfrak A}, \lam >0$ is termed the resolvent of the generator or the resolvent of the semigroup. 

In defining Feller semigroups it is customary to require also that the related process is honest, which comes down to the requirement that $\jeden_S \in \dom{\mathfrak A }$ and $\mathfrak A \mathsf 1_S= 0$ (where $\jeden_S \in \coss$ is the function equal to $1$ on the entire $S$).

We also recall the Trotter--Kato--Neveu--Macke\-vi\v cius theorem  (see e.g. \cite{kallenbergnew}*{p. 385} or  \cite{ethier}*{Chapter 4, Theorem 2.5}):  this classical result asserts that the  convergence of Feller semigroups is equivalent to the convergence of their Laplace transforms, that is, to the convergence of the resolvents of their generators. (See e.g. \cite{knigazcup} for a broader semigroup-theoretic perspective). Such convergence implies (and is implied by) weak convergence of distributions of the corresponding processes in the Skorokhod space $D([0,\infty), S)$, provided that the initial distributions of these processes converge weakly also. 
\subsection{Cosine families}  \label{sec:cf} 
A strongly continuous family $\{C(t), t \in \R\}$ of operators in a Banach space $\mathsf F$ is said to be a cosine family iff $C(0)$ is the identity operator and 
\[ 2 C(t) C(s) = C(s+t) + C(t-s), \qquad t,s\in \R.\]
The generator of such a family is defined by 
\[\mathfrak A f = \lim_{t\to 0} 2t^{-2}{(C(t)f- f)}\]
for all $f \in \mathsf F$ such that the limit on the right-hand side exists. 
For example, in $\cer$, the space of continuous functions $f\colon \R\to \R$ that have finite limits at $\pm \infty$,  there is the \emph{basic cosine family} given by  
\begin{equation}\label{intro:0} C(t) f(x) = \pol [ f(x+t) + f(x-t) ], \qquad x \in \R, t \in \R. \end{equation}
Its generator is the operator $f\mapsto \Delta f= f''$ with domain composed of twice continuously differentiable functions $f\in \cer$  such that $f'' \in \cer$. 

A cosine family is said to be composed of \emph{equibounded} operators if there is an $M>0$ such that 
\( \|C(t)\| \le M, t \in \R\); $M$ is called a bound. The basic cosine family falls into this category with bound $M=1$. As a result (cf.  \cite[Proposition 3.14.4]{abhn}), for all $f\in \cer$, the Laplace transform of the vector-valued function $t\mapsto C(t)f \in \cer$ is well-defined for $\lam >0$, $g\coloneqq \lam^{-1} \int_0^\infty \e^{-\lam t} C(t) f \ud t $ belongs to the domain of $\Delta$, and $\lam^2 g - \Delta g = f$. This can also be checked directly with the help of 
\[ g(x) = \tfrac 1{2\lam} \int_{-\infty}^x \e^{-\lam (x-y)}f(y) \ud y +  \tfrac 1{2\lam} \int_x^{\infty} \e^{-\lam (y-x)}f(y) \ud y, \qquad x \in \R. \]

Each cosine family generator is automatically the generator of a strongly continuous semigroup (but not vice versa). The semigroup  such operator  generates is given by the \emph{Weierstrass formula} (see e.g. \cite{abhn}*{p. 219})
\begin{equation}\label{cosf:1}T(0)f =f    \mqquad {and} T(t) f  = {\textstyle \frac 1{2\sqrt{\pi t}}} \int_{-\infty}^\infty \e^{-\frac {s^2}{4t}} C(s) f  \ud s, \qquad t >0, f \in \mathsf F.\end{equation} 
This formula expresses the fact that the cosine family is then, in a sense,  a~more fundamental object than the semigroup, and properties of the semigroup can be hidden in those of the cosine family.

Finally, a couple of words are in order about convergence of cosine families. Apparently, the theory is rather analogous to that of convergence of semigroups, see \cite{goldi,konishi,goldstein,laplace,knigazcup}: a sequence of equibounded cosine families with common bound converges if so does the sequence of their Laplace transforms. Nevertheless, caution needs to be exercised: 
even if convergence of, say, Feller semigroups is established, and we are able to show that the generators of these semigroups are also generators of cosine families, it is not immediate that these cosine families converge, even though convergence of Laplace transforms is granted. We still need to show that these families have a common bound. Sometimes it is not so easy, and sometimes the bound simply does not exist. See also \cite{zwojtkiem}.

\section{The main results} \label{sec:tmr}
Let $\ka\ge 2$ be a natural number, and let 
\[ \kad \coloneqq \{1,\dots, \ka \}.\]
We will be concerned with Feller processes that have values on the compactified infinite star graph \[ S\coloneqq K_{1,\ka}\]  with $\ka$ edges; this graph is obtained from $\ka$ copies of the compact half-line $[0,\infty]$, by lumping all zeros into a single vertex, denoted in what follows $\zero$,  see Figure \ref{rys1}. We will find it convenient to describe the space of continuous functions on $S$  in terms of the space $C[0,\infty]$, the latter being the space of continuous $f\colon [0,\infty) \to \R$ such that the limit $\lim_{x\to \infty} f(x)$ exists and is finite. We will namely identify $C(S)$ with the subspace of the Cartesian product $(C[0,\infty])^\ka$ of $\ka$ copies of $C[0,\infty]$ composed of vectors $\seq{f_i}$ such that $f_i(0)$ does not depend on $i\in \kad$. 

\subsection{Convergence of semigroups} 

Let us begin by introducing three main semigroups featured in our result: the semigroup of sticky Walsh process on a star graph, the semigroup of stopped Brownian motion on this graph, and the semigroup of Walsh process with additional semi-permeable membranes on the edges. 

As for the first of these, 
given nonnegative $\alpha_i, i \in \kad$ and  $\beta$ such that $\sui \alpha_i =1$ we define the (sticky) Walsh process on $S$ as the process related to the Feller semigroup generated by the following operator $\gen $ (see e.g. \cite{abap,onportenko,fromsotows,kostrykin2012,lejayskew}). 
 The domain $\dom{\gen}$ 
is composed of $\seq{f_i}\in \coss$ such that 
\begin{itemize} 
\item [(a) ] each $f_i$ is twice continuously differentiable (with one-sided derivatives at $0$); the limits $ \limx f_i'(x), \limx f_i''(x)$ exist and are finite;  
\item [(b) ] $f_i''(0)$ does not depend on $i$; this common value is denoted $f''(\zero)$;
\item [(c) ] $\sui \alpha_i f_i'(0) = \frac \beta 2 f''(\zero ) $.  
\end{itemize} 
Notably, even though $f_i'(0)$ depends on $i$, condition (b) implies that  $\seq{f''}$ is a member of $\coss$, and thus it makes sense to agree that 
\[ \gen \seq{f_i} \coloneqq  \pol \seq{f_i''}. \]

Next, we describe the semigroup of the \emph{stopped} Brownian motion on $S$, the process that stays at $\zero$ after reaching it for the first time. Its characteristic boundary condition is obtained (informally) from that featured in (c) by  letting $\beta \to \infty$. By that we mean that  the domain of its generator, say, $\abs$, is composed of $f = \seq{f_i}\in \coss $ such that 
\begin{itemize} 
\item [(a) ] each $f_i$ is twice continuously differentiable, 
\item [(b) ] the derivatives $f_i'$ and $f''_i$ have finite limits at $\infty$,   
\item [(c) ] $f''_i(0)=0, i \in \kad$.   
\end{itemize} 
As in the previous case, for $f\in \dom{\abs}$,  $\abs f = \pol \seq{f_i''}$.

Finally, 
to describe the approximating processes featured in our main theorem (Theorem \ref{main}) we additionally need a family of parameters $c_i, i \in \kad,$  specifying the locations of the soft membranes at the edges of the graph, together with parameters $p_{\eps,i}\in (0,1), \eps >0, i \in \kad,$ which characterize their permeability.
 We fix $\alpha_i,i \in \kad $ such that $\sui \alpha_i =1$, and $\beta\ge 0 $ and consider the operators $\gep$ defined as follows. For $\seq{f_i}$ in the domain of $\gep$,   
\begin{itemize} 
\item [(a) ] each $f_i$ is twice continuously differentiable in either of the sets $[0,c_i\eps]$  and $[c_i\eps, \infty) $ with one-sided derivatives at the ends, 
\item [(b) ] $ (1-\pepi) f'_i(c_i\eps - )=\pepi f_i'(c_i\eps+)$ whereas $f_i''(c_i\eps -)= f''_i(c_i \eps+), i \in \kad$, 
\item [(c) ] the limits $\limx f'_i(x)$ and $\limx f_i''(x)$ exist and are finite, 
\item [(d) ] $f_i''(0)$ does not depend on $i$; this common value is denoted $f''(\zero)$;
\item [(e) ] $\sui \alpha_i f_i'(0) = \frac \beta 2 f''(\zero ) $.  \end{itemize} 
As in the case of $\gen$, for $\seq{f_i}\in \dom{\gep}$ it makes sense to speak of $f_i''(c_i\eps)$ even though $f_i'(c_i\eps)$ in general does not exist, and $\seq{f_i''}$ is a member of $\coss$; we agree that 
\[ \gep \seq {f_i} \coloneqq \pol \seq{f_i''}. \]

Our main result asserts convergence of the semigroups generated by $\gep$ to the semigroup of Walsh's process with modified parameters. The function $s$ featured in the theorem measures the speed of convergence.

\begin{thm}\label{main} In the scenarios described below, 
\[ \grae \e^{t\gep} f  = \e^{t\genm} f, \qquad f\in \coss, t\ge 0, \] 
uniformly for $t$ in compact subintervals of $[0,\infty)$, where  $\genm$ is the generator of the Walsh process with modified parameters  $\wt \alpha_i, i \in \kad $ and $ \wt \beta$, prescribed  as follows. 
 \begin{itemize} \item [(a) ] If $\grae \pepi = p_i \in [0,1), i \in \kad$ with at least one $p_i\alpha_i$ nonzero, then 
  \[ \wt \alpha_i \coloneqq  \gamma^{-1} \tfrac {p_i\alpha_i}{1-p_i}, \quad i \in \kad   \mquad {and} \wt \beta \coloneqq  \gamma^{-1} \beta , \] where $\gamma \coloneqq \sum_{j\in \kad} \frac {p_j\alpha_j}{1-p_j}$ is the normalizing constant.  If, however, all $p_i\alpha_i$s vanish but $\beta >0$, then 
\[ \wt \alpha_i \coloneqq 0, \quad i \in \kad,      \mquad {and} \wt \beta \coloneqq  \infty , \]
 that is, the limit process is the stopped Brownian motion. 
 \item [(b) ] If $\beta =0$ and there is a function $s\colon (0,\infty) \to (0,\infty)$ such that
$\grae s(\eps)$ $=0$,  $\grae \frac \eps {s(\eps)} =0$, and $\grae \frac  \pepi {s(\eps)} = p_i \ge 0, i \in \kad$ with at least one $p_i\alpha_i$ nonzero, then  \[ \wt \alpha_i \coloneqq  \tfrac{p_i\alpha_i}{\sum_{j\in \kad} p_j \alpha_j}, \quad i \in \kad    \mquad {and} \wt \beta \coloneqq  0.  \]  
\item [(c) ] If $\beta =0$, and $\grae \frac {\pepi} \eps = p_i \ge 0, i \in \kad$ with at least one $p_i\alpha_i$ nonzero, then \[ \wt \alpha_i\coloneqq  \tfrac{p_i\alpha_i}{\sum_{j\in \kad} p_j \alpha_j}, \quad i \in \kad    \mquad {and}   \wt \beta \coloneqq \tfrac {2 \sui \alpha_i c_i  }{\sum_{j\in \kad} p_j \alpha_j}.  \] 
\item [(d) ]  If $s\colon (0,\infty) \to (0,\infty)$ is such that $\grae s(\eps)=0$, $\grae \frac \eps {s(\eps)} =0$, and $\grae \frac {1-\pepi}{s (\eps)} = q_i>0 $, then 
  \[ \wt \alpha_i \coloneqq  \gamma^{-1} \tfrac {\alpha_i}{q_i}, \quad i \in \kad   \mquad {and} \wt \beta \coloneqq  0 , \] where $\gamma\coloneqq \sum_{j\in \kad} \frac {\alpha_j}{q_j}$ is the normalizing constant.  
  \end{itemize}
\end{thm}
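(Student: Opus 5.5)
By the Trotter--Kato--Neveu--Mackevi\v cius theorem recalled in Section \ref{sec:prelim}, it suffices to show that for each $\lam>0$ and $g=\seq{g_i}\in\coss$ the resolvents converge:
\[ \grae \rez{\gep} g = \rez{\genm} g \]
uniformly on $S$. Here the limit operator is a Feller generator: either $\genm$ with the modified parameters, or $\abs$ in the degenerate case of (a). We may also assume that each $\gep$ is a Feller generator. I will compute both resolvents explicitly and compare them.

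Write $\mu=\slam$. For $\gep$, on each edge the solution $f=\rez{\gep}g$ of $\lam f-\tfrac12 f''=g$ has the form
\[ f_i = \rla g_i + a_i \e^{-\mu x} + b_i \e^{\mu x} \quad\text{on } [0,c_i\eps], \qquad f_i=\rla g_i + d_i\e^{-\mu x} \quad\text{on } [c_i\eps,\infty). \]
Here $\rla g_i$ is a fixed particular solution on the half-line, for instance the resolvent of the Brownian motion killed, or reflected, at $0$, extended smoothly. The constants are fixed by the conditions defining $\dom{\gep}$:
\begin{itemize}
\item[(i)] $f_i''$ continuous at $c_i\eps$, which is equivalent to $f_i$ continuous there since $f_i''=2(\lam f_i-g_i)$;
\item[(ii)] the flux condition $(1-\pepi)f_i'(c_i\eps-)=\pepi f_i'(c_i\eps+)$;
\item[(iii)] the common value $f_i(0)=:f(\zero)$;
\item[(iv)] the condition $\sui\alpha_i f_i'(0)=\tfrac\beta2 f''(\zero)=\beta(\lam f(\zero)-g(\zero))$.
\end{itemize}
Using (i) and (ii), I express $f_i'(0)$ in terms of $f(\zero)$ and $g$. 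Expanding to first order in $\eps$ (with $\cosh,\sinh$ of $\mu c_i\eps$) gives, up to $O(\eps)$ terms,
\[ f_i'(0)\approx \frac{\pepi}{1-\pepi}\,\bigl(\ell_i(g)-\mu f(\zero)\bigr) + (\text{correction of size } c_i\eps\,(\lam f(\zero)-g(\zero))\cdot\text{const}), \]
where $\ell_i(g)$ is the derivative at $0$ of the whole-edge solution. The key point is that the "inner" segment $[0,c_i\eps]$ behaves like a small reservoir. The derivative ratio across the membrane gives a factor $\pepi/(1-\pepi)$. The segment itself contributes a term of size $\eps\cdot f''$ to the relation between $f_i'(0)$ and $f_i'(c_i\eps+)$, because $f_i'(c_i\eps-)-f_i'(0)=\int_0^{c_i\eps}f_i''\approx c_i\eps f''(\zero)$.

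Substituting into (iv) yields a linear equation for $f(\zero)$. Precisely,
\[ \sui \alpha_i\Bigl[\tfrac{\pepi}{1-\pepi}\,f_i'(c_i\eps+)\Bigr] \;+\;\sui\alpha_i c_i\eps\, f''(\zero)(1+o(1)) \;=\; \tfrac\beta2 f''(\zero). \]
I then take limits in each scenario after dividing by the appropriate scale:
\begin{itemize}
\item[(a)] Divide by nothing. Then $\pepi/(1-\pepi)\to p_i/(1-p_i)$ and the $\eps$-term vanishes, giving $\sum\alpha_i\frac{p_i}{1-p_i}f_i'(0)=\tfrac\beta2 f''(\zero)$, i.e. the normalized parameters $(\wt\alpha,\wt\beta)$. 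If all $p_i\alpha_i=0$ and $\beta>0$, then $f''(\zero)\to0$, which is the stopped Brownian motion.
\item[(b)] Divide by $s(\eps)$. The $\eps$-term is $o(s)$.
\item[(c)] Divide by $\eps$. Both terms survive, and this produces $\wt\beta=2\sum\alpha_ic_i/\sum p_j\alpha_j$.
\item[(d)] Multiply by $s(\eps)$, using $s/(1-\pepi)\to1/q_i$. The term $\pepi\to1$ and the $\eps$-term is again negligible.
\end{itemize}
With $f(\zero)$ converging, the coefficients $a_i,b_i,d_i$ converge. The limit equations coincide with the equations characterizing $\rez{\genm}g$.

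The main obstacle is uniformity. The functions $f_{\eps}$ have a kink at $c_i\eps$ and possibly a steep profile on the tiny segment, so I must check that $\sup_{x\in[0,c_i\eps]}|f_i(x)-f(\zero)|\to0$. In scenario (d) especially, the inner derivatives may blow up like $1/(1-\pepi)$ relative to outer ones, so care is needed. I would bound the inner-segment variation by $c_i\eps\sup|f_i'|$ and show that $\eps f_i'(0)=O(\eps/s(\eps))\to0$. This is exactly where the hypothesis $\eps/s(\eps)\to0$ is used. Uniform boundedness of $f(\zero)$ follows from $\|\rez{\gep}\|\le\lam^{-1}$. Combining this with convergence of the coefficients gives uniform convergence of the resolvents, and hence the theorem.
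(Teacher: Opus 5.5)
Your plan is essentially the paper's proof. The paper writes $\rlae g=\rlae^0 g+E_\eps(g)\elame$, where $E_\eps(g)$ is exactly your $f(\zero)$ as fixed by the boundary condition at $\zero$. It computes the limit of $E_\eps(g)$ in each of the four scenarios (Lemma~\ref{lemik:3}), obtains uniform convergence of the part killed at $\zero$ from $\eps\cepi(g)\to0$ (Lemma~\ref{lemik:4}, the counterpart of your bound $\eps f_i'(0)\to0$ on $[0,c_i\eps]$), and then applies Trotter--Kato, as you do.

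Two slips need fixing.
\begin{itemize}
\item Since $f_i'(0)=f_i'(c_i\eps-)-\int_0^{c_i\eps}f_i''$, the term $\sui\alpha_i c_i\eps f''(\zero)$ in your displayed boundary equation must carry a minus sign. As written, scenario (c) would give a negative $\wt\beta$ rather than the value you state.
\item In scenario (d) the inner coefficients $a_i,b_i$ need not converge, since $b_i-a_i$ is typically of order $1/s(\eps)$. This is harmless, because your variation bound, not coefficient convergence, is what controls the inner segment.
\end{itemize}
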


By the Trotter--Kato--Neveu--Macke\-vi\v cius Theorem, our result can be equivalently stated by saying that in the scenarios described above the processes $X_\eps$ underlying $\gep$, seen as random elements with values in  $D([0,\infty), S)$,  converge weakly in  distribution to the Walsh process, provided that the initial distributions of these processes converge weakly also.

Notably, as (a) reveals in particular,  if some of the limit probabilities $p_i$ are positive, but the remaining $p_i$s vanish, the limit process cuts off the possibility of entering the edges corresponding to the latter $p_i$s, even if the original $\alpha_i$ was nonzero.  Also, by (b), a similar effect is observed if the rates of convergence of $\pepi, i \in \kad$ to $0$ differ. Interestingly, furthermore, if $\pepi$s converge to $0$ rapidly (see (c)), the membrane at $\zero$ becomes sticky, even if the approximating ones are not.  Case (a) also reveals that the limit process may have a trap at $\zero$ if the approximating ones have even a small amount of stickiness at $\zero$.  By (d), if all the approximating membranes, with similar but not too  high a rate, have a tendency to reflect the paths away from $\zero,$  the limit process may lose its stickiness. Finally, it is remarkable that  parameters $c_i, i =1,\dots, \ka$ do not influence the limit in the cases (a), (b) and (d) at all.

\subsection{Convergence of cosine families}

Our main result (Thm. \ref{main}) is accompanied by the information on the convergence of the corresponding cosine families. Here, because of the limitations of the Lord Kelvin method of images involved in our analysis, we restrict ourselves to the case where 
\[ \ka =2, \beta =0, \quad \alpha_1=\alpha_2 = \pol   \quad \text{ and } \quad c_1=c_2=1,\]
and focus on the scenario described in the first part of Theorem \ref{main} (a). This allows us to dive deeper into the structure of the objects under study. The related process can be thought of as a Brownian motion skewed at two points: $-\eps$ and $\eps$.

Here is the main result on cosine families. 

\begin{thm}\label{dodatek} Let $\gep$ be the operator defined before Theorem \ref{main}.
\begin{itemize} \item [(a) ] Each $\gep, \eps >0$ is  the generator of a cosine family $C_\eps$ such that 
\begin{equation}\label{cocf:1} \|C_\eps(t) \| \le 11 + \tfrac {12}{1-|q_\eps|}, \qquad t \in \R,  \end{equation}
where 
 \begin{equation}\label{cocf:0} q_\eps  \coloneqq (1-2p_{\eps,1}) (1-2p_{\eps,2}) \in (-1,1). \end{equation}

\item [(b) ] If $\grae p_{\eps,i} = p_i \in [0,1)$, and $q\coloneqq (1-2p_1)(1-2p_2)$ differs from $1$ (equivalently, $ p_{1}+p_{2} >0$), then 
\[ \grae \sup_{t\in \R} \|C_\eps (t) f  - C_0(t)f \| = 0, \qquad f \in \coss , \]
where $C_0$ is the cosine family generated by $\mathfrak H_{\wt \alpha,0} $ with (cf. Thm. \ref{main} (a)) 
\begin{equation} \label{wojoj} \wt \alpha_i \coloneqq \gamma^{-1} \tfrac {p_i}{1-p_i}, i =1,2 \quad \text{and} \quad \gamma \coloneqq \tfrac {p_1}{1-p_1}
+  \tfrac {p_2}{1-p_2}.\end{equation}
\end{itemize}
\end{thm}

\begin{remark}

Thm. \ref{dodatek} (b), when combined with the  Weierstrass formula \eqref{cosf:1}, implies that (see e.g. \cite{abernachr}*{Remark 2.1})
\[ \grae \sup_{t\ge 0} \|\e^{t\gep }f - \e^{t \mathfrak H_{\wt \alpha,0}}f \| =0, \qquad f \in \coss\] 
and thus strengthens the first part of Thm. \ref{main} (a).  \end{remark}

 \begin{remark} 

The cosine family generated by   $\mathfrak H_{\wt \alpha,0} $
can be written explicitly as (see  \cite{abtk}*{Section 6} or \cite{fromsotows}*{Thm. 5.1 (c)})
\begin{equation}\label{dodatek:8} (C_0 (t)f)_i (x) =   C\big (\textstyle{\frac {\sqrt 2}2 t}\big)\fir(x), \quad  f=(f_1,f_2) \in \coss,x\ge 0, t \in \R,  
\end{equation} 
where $\{C(t), t\in \R\}$ is the basic cosine family defined in Section \ref{sec:cf} and 
\begin{align} 
 \fir (x) & \coloneqq \begin{cases} f_i(x) & x\ge 0, \\
(2\wt \alpha_i-1) f_i(-x) + 2\wt \alpha_j f_j(-x), & x<0, \end{cases}\label{ext_skew} \end{align}
  \( i =1,2, j=3-i. \)
 \end{remark}

\section{Proof of Theorem \ref{main}}

\subsection{The resolvent of $\gen$}\label{sec:tro}
We begin by noting that the resolvent of the Walsh process can be given explicitly (see e.g. \cite{abap}) in terms of the resolvent $\rla^0, \lam >0$  of the minimal Brownian motion on $S$, the latter being defined, for all $g=\seq{g_i} \in \coss, \lam >0$ and $x\ge 0$,  by \begin{align} 
(\rla^0 g)_i(x) 
&= C_i(g) \sinh \slam x - \sqrt {\tfrac 2{\lambda}}\int_0^x \sinh \slam (x-y) g_i(y)\ud y, \label{tro:1} \end{align}
where 
\begin{align*}C_i(g)&  \coloneqq \slem \czn \e^{-\slam y} g_i(y)\ud y, \qquad i \in \kad.   \end{align*}
To wit, we have 
\begin{equation}\label{tro:2} \rez{\gen} g = \rla^0 g + E(g) \elam , \qquad g\in \coss, \lam >0, \end{equation}  
where 
\begin{align}\label{tro:3a} 
E(g)& \coloneqq \frac {\slam \sui \alpha_i C_i(g) + \beta g(\zero)}{\lam (\slam \sui \alpha_i C_i(\jeden_S)+ \beta)} 
, \end{align}
and 
\begin{equation*} (\elam )_i (x) \coloneqq \e^{-\slam x}, \qquad x \ge 0, i \in \kad. \end{equation*}
The resolvent of the stopped process has a similar form: we have 
\[ \rez{\abs} g  = \rla^0 g + \tfrac{g(\zero)}\lam \elam, \qquad g \in \coss, \lam >0.  \]

\newcommand{\cepi}{C_{\eps,i}}
\newcommand{\depi}{D_{\eps,i}}

\subsection{A counterpart of $\rla^0, \lam >0$} % and the resolvent of $\gep$}

The basic idea of the proof of Theorem \ref{main} is to represent the resolvent of each $\gep$ in a form resembling \eqref{tro:2}. To this end, we fix $\eps >0$ and introduce the family $\rlae^0, \lam >0$, which  for $\rez{\gep}$  plays a role analogous to that played by $\rla^0, \lam >0$ for $\rez{\gen}, \lam >0$. This family describes the process, say, 
\[ X_\eps^0,\]  which, while on the $i$th edge, $i \in \kad$, behaves as a Brownian motion that passes through a~soft membrane located at a distance $c_i \eps >0$ from the graph center~$\zero$, but is undefined from the first moment it touches $\zero$.

We start with the definition of the related operator $\gepo$. Its domain $\dom{\gepo}$ is composed of  $\seq{f_i}\in \coss$ such that conditions (a)--(d) of the definition of $\dom{\gep}$ are satisfied but (e) is replaced by: 
\begin{itemize}
\item [(e') ]  $f_i (0)=0, i \in \kad$.   
\end{itemize} 
As above, even though $f_i'(c_i \eps)$ in general does not exist, condition (b) implies that  it makes sense to speak of $f_i''(c_i \eps), i \in \kad$. Also, by (c) and (d), $\seq{f''}$ is a member of $\coss$, and thus we can agree that 
\[ \gepo \seq{f_i} \coloneqq  \pol \seq{f_i''}. \] 
It is probably worth stressing here that $\gepo$ is not densely defined, but otherwise enjoys all the properties of a Feller generator, and thus in particular falls into the category studied in the seminal, by now classical paper \cite{dapratos}.
The following lemma contains information on the resolvent of $\gepo$.

\begin{lemma}\label{lemik:1}Let $g=\seq{g_i}\in \coss$, $\eps,\lam>0$ and $ i\in \kad $ be fixed. Also, let 
\begin{align}\nonumber f_i(x) & \coloneqq \Big (\cepi (g) \sinh \slam x - \slem \int_0^x \sinh \slam (x-y)g_i(y)\ud y \Big ) [0\le x < c_i \eps] 
\\&\phantom{=} + \Big (\depi (g) \e^{-\slam (x-c_i \eps )} + \tfrac 1\slam \int_{c_i \eps}^\infty \e^{-\slam |x-y|}g_i(y)\ud y \Big)[x\ge c_i \eps], \label{tro:3} \end{align}
where $[ \cdot]$ is the Iverson bracket, 
\begin{align*}%\label{20kw:2} 
\cepi (g) &\coloneqq  \frac {\pepi \slem \int_0^\infty \e^{-\slam y}g_i(y+c_i \eps)\ud y + \slem \int_0^{c_i \eps} \chi_{\eps,i} (c_i \eps - y)g_i(y)\ud y}{\chi_{\eps,i} (c_i \eps) },\\
\depi (g) & \coloneqq \cepi (g)  \sinh \slam c_i \eps - \slem \int_0^{c_i \eps} \sinh \slam (c_i \eps - y)g_i(y)\ud y \\ &\phantom{=}-\tfrac 1\slam \int_0^\infty \e^{-\slam y}g_i(y+c_i \eps)\ud y, \end{align*}
and, to simplify the formula, $\chi_{\eps,i} (x) \coloneqq  (1-\pepi )\cosh \slam x + \pepi \sinh \slam x,$  $x \in \R$. Then, $f\coloneqq \seq{f_i}$ is a unique solution to the resolvent equation $\lam f - \gepo f = g$, and so in what follows we will write  
\[f =\rlae^0 g \coloneqq  \rez{\gepo}g. \]   
\end{lemma}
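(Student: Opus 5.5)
The statement decouples across edges, since condition (e') replaces the coupling boundary condition and (d) becomes automatic when $f_i(0)=0$ (indeed $f_i''(0)=2\lam f_i(0)-2g_i(0)=-2g(\zero)$ is common to all edges). So I will fix $i\in\kad$ and work with a scalar problem on $[0,\infty)$. The claim then has two parts. First, the formula \eqref{tro:3} solves $\lam f_i-\pol f_i''=g_i$ separately on $[0,c_i\eps)$ and on $(c_i\eps,\infty)$, satisfies the transmission conditions in (b), the condition $f_i(0)=0$, and the regularity requirements (a) and (c). Second, the solution is unique.

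For existence I will check the pieces in turn.
\begin{itemize}
\item On $[0,c_i\eps]$ the expression is $\cepi(g)\sinh\slam x$ (a solution of the homogeneous equation) plus the variation-of-constants particular solution with zero Cauchy data at $0$. Hence it solves the ODE and vanishes at $0$.
\item On $[c_i\eps,\infty)$ the term $\tfrac1\slam\int_{c_i\eps}^\infty \e^{-\slam|x-y|}g_i(y)\ud y$ is the standard half-line Green's-function particular solution. Adding $\depi(g)\e^{-\slam(x-c_i\eps)}$ keeps the solution bounded. Since $g_i$ has a limit at $\infty$, so do $f_i$, $f_i'$ and $f_i''=2\lam f_i-2g_i$, which gives (c).
\item The continuity of $f_i$ at $c_i\eps$ is precisely the defining relation for $\depi(g)$. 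At $x=c_i\eps$ the exponential part equals $\depi(g)+\tfrac1\slam\int_0^\infty\e^{-\slam y}g_i(y+c_i\eps)\ud y$, and this matches the left limit.
\item $f_i''(c_i\eps\pm)$ agree automatically, because $f_i''=2(\lam f_i-g_i)$ and both $f_i$ and $g_i$ are continuous.
\item It remains to verify the flux condition $(1-\pepi)f_i'(c_i\eps-)=\pepi f_i'(c_i\eps+)$. Write $I:=\int_0^\infty\e^{-\slam y}g_i(y+c_i\eps)\ud y$. Then $f_i'(c_i\eps+)=-\slam\depi(g)+I$ and $f_i'(c_i\eps-)=\slam\cepi(g)\cosh\slam c_i\eps-2\int_0^{c_i\eps}\cosh\slam(c_i\eps-y)g_i(y)\ud y$. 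Substituting $\depi(g)$, the $I$-terms combine to $2\pepi I$. The resulting linear equation in $\cepi(g)$ has coefficient $\slam\,\chi_{\eps,i}(c_i\eps)$, and the integral terms collect into $\int_0^{c_i\eps}[2(1-\pepi)\cosh+2\pepi\sinh](\slam(c_i\eps-y))g_i\ud y=2\int_0^{c_i\eps}\chi_{\eps,i}(c_i\eps-y)g_i(y)\ud y$. Solving yields exactly the stated $\cepi(g)$. The factor $\slem=2/\slam$ and the positivity $\chi_{\eps,i}(c_i\eps)>0$ (since $\pepi\in(0,1)$) make the division legitimate.
\end{itemize}
This derivative bookkeeping is the only real computation and the step where sign or factor errors are likely. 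I will carry it out with $I$ kept as a single symbol.

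For uniqueness, take the difference $h$ of two solutions. It satisfies $\lam h=\pol h''$ on each piece, so $h=a\sinh\slam x$ on $[0,c_i\eps]$ because $h(0)=0$, and $h=b\e^{-\slam(x-c_i\eps)}$ beyond $c_i\eps$ because growing exponentials are excluded by $h\in C[0,\infty]$. Continuity gives $b=a\sinh\slam c_i\eps$. The flux condition then gives $(1-\pepi)a\cosh\slam c_i\eps=-\pepi a\sinh\slam c_i\eps$, that is, $a\,\chi_{\eps,i}(c_i\eps)=0$, so $a=0$. Alternatively, a maximum-principle argument works: at a positive maximum of $h$, either $h''\le0$ or, at the membrane, the flux condition forces one-sided derivatives of opposite signs to vanish. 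The explicit computation above is simpler, so I will use it.
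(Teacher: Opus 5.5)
Your proof is correct. The existence half follows the paper's argument: you verify the ODE on each piece, get continuity at $c_i\eps$ from the definition of $\depi(g)$, get $f_i''(c_i\eps-)=f_i''(c_i\eps+)$ from $f_i''=2(\lam f_i-g_i)$, and check (c), (d), (e'). The paper summarizes the flux condition as a ``direct calculation'', whereas you actually carry it out. Your bookkeeping is right: the $I$-terms combine to $2\pepi I$, the coefficient of $\cepi(g)$ is $\slam\,\chi_{\eps,i}(c_i\eps)$, and with $\slem=2/\slam$ you recover $\cepi(g)$ exactly.

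Your uniqueness argument takes a genuinely different route. The paper notes that $\gepo$ satisfies the positive-maximum principle and is therefore dissipative. You instead solve the homogeneous problem edge by edge. Using $h_i(0)=0$, boundedness at infinity, continuity and the flux condition, you reduce it to $a\,\chi_{\eps,i}(c_i\eps)=0$, and hence $a=0$ because $\pepi\in(0,1)$.

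Your route is fully self-contained and shows exactly where the constraint $\pepi\in(0,1)$ enters. The paper's route does not depend on being able to solve the ODE explicitly. It also gives the contraction bound $\lam\|\rlae^0 g\|\le\|g\|$, which your argument does not. Finally, it is the same maximum-principle check the paper reuses for $\gep$ in Proposition~\ref{lemik:2}.
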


\begin{proof} First, we need to check that $f$ belongs to $\dom{\gepo}$. To begin with, continuity of $f_{i}$ can be an issue only at  $x=c_i \eps$, but here it is guaranteed by the definition of $\depi$. Moreover, a direct calculation shows that each $f_{i}$ is twice continuously differentiable on $[0,c_i \eps]$ and $[c_i \eps, \infty)$ separately  (with one-sided derivatives at the ends), and in each of these intervals solves $\lam f_i - \pol f''_{i} = g_i $. This, in particular, takes care of part (a) in the definition of $\dom{\gepo}$.

Turning to (b), we note that the first  condition featured in it is a result of the special choice of $\cepi$ and $\depi$. Also, the second condition follows by the already established differential equation and the fact that $g_i$ and $f_{i}$ are continuous.

Next, the limits $\limx f_{i}(x)$ and  $\limx f'_{i}(x)$ obviously exist and are finite, $i\in \kad$.  The same statement on $\limx f_i''(x)$ is a direct consequence of the already established differential equation and the fact that $\limx f_{i} (x)$ and $\limx g_i(x)$ exist and are finite too. This shows (c).

Finally, (e') is obvious, and the  differential equation establishes also that $f''_i(0)$ does not depend on $i$, because neither do $g_i(0)$ and $f_i(0)$, thus proving (d). Now, once the fact that $f$ belongs to $\dom{\gepo}$ is proven, the differential equation turns out to be just another form of $\lam f - \gepo f = g$.  

Uniqueness of solution is a consequence of the fact that, as can be easily seen, $\gepo$ satisfies the positive-maximum principle, and thus is dissipative (see e.g. \cite{ethier}*{Lemma 2.1, p. 165}).
\end{proof}

 Before proceeding we note that  
\[ \cepi (\jeden_S) = \frac {(1-\pepi)\sinh \slam c_i \eps + \pepi \cosh \slam c_i \eps}{\lam ((1-\pepi)\cosh \slam c_i \eps + \pepi \sinh \slam c_i \eps)}. \] 
Consequently,  
 \[ \elame \coloneqq \jeden_S - \lam \rlae^0  \jeden_S \] can be calculated to be given by  
\begin{align*} (\elame)_i (x) &= (\cosh \slam x-  \cepi (\jeden_S) \lam \sinh \slam x )[0\le x< c_i \eps ] \\&\phantom{=}+ \frac {(1-\pepi )\e^{-\slam (x-c_i \eps)}}{(1-\pepi ) \cosh \slam c_i \eps + \pepi \sinh \slam c_i \eps}[x\ge c_i \eps]. \end{align*}
The function $\lam \mapsto (\elame)_i (x) $ is the Laplace transform of the life-time of the underlying process $X_\eps^0$ starting at the $i$th edge at the distance $x\ge 0$ from the graph center.  Notably, but not surprisingly, the second term above is the product of two Laplace transforms: the first of these, $\lam \mapsto \e^{-\slam (x-c_i \eps)}$   is the transform of the time needed for a one-dimensional Brownian motion started at the $x>c_i \eps$ to reach $c_i \eps $ for the first time, and the second, $\lam \mapsto  \tfrac {1 - \pepi}{(1-\pepi)\cosh \slam c_i \eps + \pepi \sinh \slam c_i \eps}$ is the life-time of $X_\eps^0$  started at the $i$th edge at  the distance $c_i \eps$ from the graph center.  

\subsection{The resolvent of $\gep$} 

With $\rlae^0, \lam >0$ under our belt, we define the family $\rlae , \lam >0$ by 
\begin{equation}\label{tro:4} \rlae g \coloneqq  \rlae^0 g + E_{\eps} (g) \elame , \qquad g\in \coss, \lam >0, \end{equation}  
where $\elame $ was described above and 
\begin{align*}
E_\eps (g)& \coloneqq \frac {\slam \sui \alpha_i \cepi (g) + \beta g(\zero)}{\lam (\slam \sui \alpha_i \cepi (\jeden_S)+ \beta)}. \end{align*}

\begin{proposition}\label{lemik:2} Each $\gep$ is a Feller generator, and $\rlae, \lam >0$ introduced in \eqref{tro:4} is its resolvent. \end{proposition}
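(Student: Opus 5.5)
We verify the three conditions of the Hille--Yosida--Feller characterization recalled in Section \ref{sec:prelim}: density of $\dom{\gep}$, the positive-maximum principle, and the range condition. The range condition comes with the explicit resolvent, because we will show that $f\coloneqq \rlae g$ from \eqref{tro:4} solves $\lam f-\gep f=g$.

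\emph{Range condition.} Fix $g\in\coss$ and $\lam>0$.

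1. By Lemma \ref{lemik:1}, $\rlae^0 g$ satisfies (a)--(d) of the definition of $\dom{\gep}$, and $\lam \rlae^0 g-\pol(\rlae^0 g)''=g$ holds edgewise.

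2. The function $\elame=\jeden_S-\lam\rlae^0\jeden_S$ also satisfies (a)--(d). Condition (b) holds because it is linear and is satisfied both by $\jeden_S$ and by $\rlae^0\jeden_S$. Moreover, $\lam\elame-\pol\elame''=\lam\jeden_S-\lam\jeden_S=0$. Since $(\rlae^0\jeden_S)_i(0)=0$, we get $(\elame)_i(0)=1$, so $\elame$ is continuous at $\zero$, hence lies in $\coss$.

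3. Consequently, $f=\rlae^0g+E_\eps(g)\elame$ satisfies (a)--(d) and the differential equation $\lam f-\pol f''=g$ on every edge. It remains to check (e) for a suitable constant $E_\eps(g)$, and the formula is chosen to make this true. We compute
\[
(\rlae^0g)_i'(0)=\slam\,\cepi(g), \qquad (\elame)_i'(0)=-\lam\slam\,\cepi(\jeden_S).
\]
The second derivative at $\zero$ comes from the differential equation: $f''(\zero)=2(\lam f(\zero)-g(\zero))=2(\lam E_\eps(g)-g(\zero))$. Condition (e) then reads
\[
\slam\sui\alpha_i\cepi(g)-E_\eps(g)\lam\slam\sui\alpha_i\cepi(\jeden_S)=\beta\bigl(\lam E_\eps(g)-g(\zero)\bigr),
\]
which is exactly the defining formula for $E_\eps(g)$. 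The denominator $\lam(\slam\sui\alpha_i\cepi(\jeden_S)+\beta)$ is positive because $\cepi(\jeden_S)>0$ (since $\pepi\in(0,1)$) and $\sui\alpha_i=1$.

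\emph{Positive-maximum principle.} Suppose $f\in\dom{\gep}$ attains a nonnegative maximum at a point $x$.

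1. At an interior point of an edge away from $c_i\eps$, the usual calculus argument gives $f_i''(x)\le0$.

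2. At $x=c_i\eps$, the one-sided derivatives satisfy $f_i'(c_i\eps-)\ge0\ge f_i'(c_i\eps+)$. Combined with $(1-\pepi)f_i'(c_i\eps-)=\pepi f_i'(c_i\eps+)$ and $\pepi\in(0,1)$, both derivatives vanish. Hence $f_i\in C^1$ near $c_i\eps$, and since $f''_i$ is continuous there, a Taylor argument gives $f_i''(c_i\eps)\le0$.

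3. At $\zero$, all $f_i'(0)\le0$. If $f''(\zero)>0$, each $f_i$ would be strictly convex near $0$. Then $f_i'(0)=0$ for all $i$ is impossible, because it would make $\zero$ a strict local minimum along each edge, contradicting maximality (or trivially if $f$ is constant nearby). So some $f_i'(0)<0$ with $\alpha_i>0$ is needed. That is the case when all $\alpha_i f_i'(0)=0$; otherwise $\sui\alpha_if_i'(0)<0\le\frac\beta2f''(\zero)$ contradicts (e). In the remaining case, where $\alpha_i f_i'(0)=0$ for all $i$, the edges with $\alpha_i>0$ have $f_i'(0)=0$ and positive second derivative, which again contradicts maximality.

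4. At $\infty$ we use limits, as in the standard case.

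The positive-maximum principle also gives dissipativity, hence uniqueness of the resolvent solution. Therefore $\rez{\gep}=\rlae$.

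\emph{Density.} Given $h\in\coss$, approximate it first by smooth functions that are constant near $\zero$ and near each $c_i\eps$; these lie in $\dom{\gep}$ once $\beta f''(\zero)=0$, which holds since $f''=0$ and $f'=0$ there. Alternatively, use $\lam\rlae h\to h$ as $\lam\to\infty$, which follows from the explicit formulas.

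I expect the main obstacle to be the positive-maximum principle at $\zero$ when $\beta>0$ and several $\alpha_i$ vanish. In that case a careful case analysis is needed to exclude $f''(\zero)>0$.
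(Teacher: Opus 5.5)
Your proof is correct and follows the paper's route. The paper also checks that $\rlae^0 g$ and $\elame$ satisfy conditions (a)--(d) and the resolvent equation edgewise, and that $E_\eps(g)$ is exactly the constant making $\sui\alpha_i f_i'(0)-\tfrac\beta2 f''(\zero)$ vanish; it treats density and the positive-maximum principle as routine, and you spell these out correctly. Your case analysis at $\zero$ is garbled in its wording but valid, and it can be shortened: if $f''(\zero)>0$, then for every $i$ the case $f_i'(0)=0$ is excluded by maximality, so every $f_i'(0)$ is strictly negative, giving $\sui\alpha_i f_i'(0)<0\le\tfrac\beta2 f''(\zero)$ no matter which $\alpha_i$ vanish.
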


\begin{proof} It is clear that $\gep$ is densely defined and a short argument establishes that it satisfies the positive-maximum principle. Hence, it remains to check that, for every $g\in \coss$ and $\lam >0$, $f\coloneqq \rlae g$ solves the resolvent equation $\lam f - \gep f = g$.

Let $\Delta_\eps$ be the extension of the operator $\gep$ to the domain where conditions (a)--(d) of the definition of $\dom{\gep}$ are satisfied, but (e) is possibly violated. Clearly, $\Delta_\eps$ extends $\gepo$ as well. Also, let $F\colon \dom{\Delta_\eps} \to \R$ be given by 
\[ F \seq{f_i} = \sui \alpha_i f_i'(0) - \tfrac \beta 2 f''(0),\]
so that $\seq{f_i}\in \dom{\Delta_\eps}$ belongs to $\dom{\gep}$ iff $F\seq{f_i}=0$.

 By Lemma \ref{lemik:1}, for each $g\in \coss$,  $\rlae^0g $ belongs to $\dom{\gepo}\subset \dom{\Delta_\eps}$ and we have $\Delta_\eps  \rlae^0 g = \lam \rlae^0 g  - g$.  Since $\jeden_S$ belongs to $\dom{\Delta_\eps}$, it follows that so does $\elame $, and we have  $\Delta_\eps \elame = - \lam \Delta_\eps  \rlae^0 \jeden_S = \lam (\jeden_S - \lam \rlae^0 \jeden_S) = \lam \elame$, and this yields 
 \[ \lam \rlae g - \Delta_\eps \rlae g = g .
\] 
Moreover, a direct computation shows that $F\rlae^0 g = \slam \sui \alpha_i \cepi (g) + \beta g(\zero)$, and in particular $F\elame = -\lam (\slam \sui \alpha_i \cepi (\jeden_S) + \beta)$. Hence, $E_\eps (g)$ is so chosen that $F\rlae g =0$, and the display above turns into $ \lam \rlae g - \gep \rlae g = g$. This completes the proof.  
 \end{proof}

\subsection{Limit behavior} 

The following lemma describes the four basic scenarios of asymptotic behavior of functionals $\cepi $ and $E_\eps$ introduced in \eqref{tro:3} and \eqref{tro:4}, respectively. 

\begin{lemma}\label{lemik:3}  \ \ 

 \begin{itemize} \item [(a) ] If $\grae \pepi  = p_i \in [0,1)$ then $\grae \cepi (g) = \frac {p_i}{1-p_i} C_i (g), g\in \coss ,$  $i \in \kad $, for $C_i(g)$s introduced in \eqref{tro:1}.  As a result, as long as $\sui p_i \alpha_i +\beta $ is nonzero (so that the denominator below is nonzero also),
\[ \grae E_\eps (g) = E(g) \coloneqq \frac {\slam \sui \frac{p_i\alpha_i}{1-p_i} C_i (g) + \beta g(\zero)}{\lam (\slam \sui \frac{p_i\alpha_i}{1-p_i} C_i (\jeden_S)+ \beta)}. \]

 \item [(b) ] If $\beta =0$ and $s\colon (0,\infty) \to (0,\infty)$ is such that $\grae s(\eps)=\grae \frac \eps {s(\eps)} =0$, and $\grae \frac  \pepi {s(\eps)} = p_i \ge 0$ then $\grae \frac {\cepi (g)}{s(\eps)} = p_i C_i (g), g\in \coss,  i \in \kad  $. As a result, 
    as long as $\sui \alpha_i p_i $ is nonzero,
\[ \grae E_\eps (g) = E(g) \coloneqq  \frac {\sui {p_i\alpha_i} C_i (g)}{\lam \sui {p_i\alpha_i} C_i (\jeden_S)} .\]

\item [(c) ] If $\beta =0$ and  $\grae \frac {\pepi} \eps = p_i\ge 0 $, then  $\grae \frac{\cepi (g)}\eps = p_i C_i(g) + c_i\slem g(\zero), g\in \coss, i \in \kad$. As a result, 
\[ \grae E_\eps (g)  = E(g) \coloneqq  \frac {\slam \sui {p_i\alpha_i} C_i (g) + 2g(\zero) \sum_{j\in \kad} \alpha_j c_j }{\lam (\slam \sui {p_i\alpha_i} C_i (\jeden_S)+ 2 \sum_{j \in \kad } \alpha_j c_j )}. \]
\item [(d) ] If $s\colon (0,\infty) \to (0,\infty)$ is such that $\grae s(\eps)=\grae \frac \eps {s(\eps)} =0$, and $\grae \frac {1-\pepi}{s (\eps)} = q_i>0 $, then  $\grae s(\eps) \cepi (g) = \frac{C_i(g)}{q_i}, g\in \coss, i \in \kad$. As a result, 
\[ \grae E_\eps (g)  = E(g) \coloneqq  \frac { \sui \frac{\alpha_i}{q_i} C_i (g)}{\lam \sui \frac{\alpha_i}{q_i} C_i (\jeden_S)}. \]

  \end{itemize}
\end{lemma}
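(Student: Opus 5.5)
The plan is to work directly with the explicit formula for $\cepi(g)$ from Lemma \ref{lemik:1}, split as numerator over denominator. Write $a_\eps \coloneqq c_i\eps\to 0$. The numerator is $N_\eps = \pepi I_\eps + J_\eps$ with
\[ I_\eps \coloneqq \slem \czn \e^{-\slam y} g_i(y+c_i\eps)\ud y, \qquad J_\eps \coloneqq \slem \int_0^{c_i\eps}\chi_{\eps,i}(c_i\eps-y) g_i(y)\ud y . \]
The denominator is $\chi_{\eps,i}(c_i\eps) = (1-\pepi)\cosh\slam c_i\eps + \pepi\sinh \slam c_i\eps$.

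The elementary asymptotics are as follows. By uniform continuity of $g_i$ and dominated convergence, $I_\eps\to C_i(g)$. Since $|\chi_{\eps,i}|\le \cosh \slam c_i\eps$ on $[0,c_i\eps]$, we get $|J_\eps|\le \slem\, c_i\eps\,\|g\|\cosh\slam c_i\eps = O(\eps)$. More precisely, $\chi_{\eps,i}(c_i\eps-y) = (1-\pepi) + O(\eps)$ uniformly in $y\in[0,c_i\eps]$, and $g_i(y)\to g(\zero)$ uniformly there. Hence
\[ J_\eps = \slem\, c_i\eps\,(1-\pepi)\,g(\zero) + o(\eps). \]
For the denominator, $\chi_{\eps,i}(c_i\eps) = (1-\pepi) + \pepi\cdot O(\eps) + O(\eps^2)$; more crudely, $\chi_{\eps,i}(c_i\eps)=(1-\pepi)(1+O(\eps^2)) + \pepi O(\eps)$.

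Each case now follows by inserting these expansions.

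\textbf{Case (a).} Here $N_\eps\to p_iC_i(g)$ and the denominator tends to $1-p_i>0$.

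\textbf{Case (b).} Divide $N_\eps$ by $s(\eps)$. Then $\pepi I_\eps/s(\eps)\to p_iC_i(g)$, and $J_\eps/s(\eps)=O(\eps/s(\eps))\to 0$. The denominator tends to $1$, since $\pepi\to0$.

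\textbf{Case (c).} Divide by $\eps$. Then $\pepi I_\eps/\eps\to p_iC_i(g)$ and $J_\eps/\eps\to c_i\slem g(\zero)$, because $1-\pepi\to1$. The denominator tends to $1$.

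\textbf{Case (d).} Multiply by $s(\eps)$. Then $s(\eps)N_\eps\to C_i(g)$, because $\pepi\to1$ and $J_\eps=O(\eps)$. The denominator over $s(\eps)$ equals
\[ \frac{1-\pepi}{s(\eps)}\cosh\slam c_i\eps + \pepi\frac{\sinh\slam c_i\eps}{s(\eps)} \to q_i + 0, \]
using $\eps/s(\eps)\to 0$. This yields $C_i(g)/q_i$.

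The statements about $E_\eps$ follow by rescaling its numerator and denominator by the same factor and passing to the limit.
\begin{itemize}
\item In (a), use the limits directly. The limiting denominator is $\lam(\slam\sui\frac{p_i\alpha_i}{1-p_i}C_i(\jeden_S)+\beta)$. It is nonzero under the stated hypothesis because $C_i(\jeden_S)=\slem/\slam=1/\lam>0$.
\item In (b), divide the numerator and denominator of $E_\eps$ by $s(\eps)$; $\beta=0$ here.
\item In (c), divide by $\eps$ and use $\sui\alpha_i c_i\slem g(\zero)\cdot\slam = 2g(\zero)\sum_j\alpha_jc_j$.
\item In (d), multiply by $s(\eps)$. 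The term $\beta g(\zero)s(\eps)$ vanishes in the limit, and the denominator limit $\slam\sui\frac{\alpha_i}{q_i}C_i(\jeden_S)$ is positive.
\end{itemize}

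The main (mild) obstacle is the second-order bookkeeping in (c). There the $O(\eps)$ term $J_\eps$ is exactly of the same order as $\pepi I_\eps$, so the leading coefficient $c_i\slem g(\zero)$ must be extracted carefully via uniform continuity of $g_i$ near $0$. In (d) one must check that the $\pepi\sinh$ part of the denominator is negligible compared with $s(\eps)$, which is precisely the hypothesis $\eps/s(\eps)\to0$.
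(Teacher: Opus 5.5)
Your proposal is correct. It is the elementary direct computation from the explicit formula for $C_{\varepsilon,i}(g)$ in Lemma~\ref{lemik:1} that the paper omits, and all four limits, together with the resulting limits of $E_\varepsilon(g)$, check out (using $C_i(\mathbf 1_S)=1/\lambda$ and $\sqrt{2\lambda}\,\sqrt{2/\lambda}=2$). The only slip is in case (d): you should write $N_\varepsilon\to C_i(g)$ rather than $s(\varepsilon)N_\varepsilon\to C_i(g)$, so that $s(\varepsilon)C_{\varepsilon,i}(g)=N_\varepsilon\big/\bigl(\chi_{\varepsilon,i}(c_i\varepsilon)/s(\varepsilon)\bigr)\to C_i(g)/q_i$.
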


Omitting an elementary proof of the lemma, we turn to our next auxiliary  result which settles the question of the convergence of the resolvents  $\rlae^0, \lam >0$ as $\eps \to 0$.

\begin{lemma}\label{lemik:4}Suppose that $\pepi$s are chosen so that $\grae \eps \cepi (g)=0$ for all $i\in \kad$ and $g\in \coss.$ Then, $\grae \rlae^0 = \rla^0, \lam >0$ in the strong topology, where $\rla^0, \lam >0 $ is the resolvent of the minimal Brownian motion introduced in \eqref{tro:1}. Otherwise, convergence fails to hold.  \end{lemma}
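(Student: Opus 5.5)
We compare the explicit formula \eqref{tro:3} for $(\rlae^0 g)_i$ with the formula \eqref{tro:1} for $(\rla^0 g)_i$ edge by edge, splitting each edge into $[0,c_i\eps)$ and $[c_i\eps,\infty)$. Throughout, $\|g\|$ is the supremum norm and all estimates are uniform in $x$.

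\emph{Outer part, $x\ge c_i\eps$.} Write $(\rla^0 g)_i(x)=\frac1\slam\int_0^\infty \e^{-\slam|x-y|}g_i(y)\ud y - \frac1\slam\e^{-\slam x}\int_0^\infty\e^{-\slam y}g_i(y)\ud y$, which is an equivalent form of \eqref{tro:1}. Compare it with $\depi(g)\e^{-\slam(x-c_i\eps)}+\frac1\slam\int_{c_i\eps}^\infty \e^{-\slam|x-y|}g_i(y)\ud y$.
\begin{itemize}
\item The two integrals over $|x-y|$ differ by at most $\frac{c_i\eps}{\slam}\|g\|$.
\item The factor $\e^{-\slam(x-c_i\eps)}$ differs from $\e^{-\slam x}$ by $O(\eps)$, uniformly in $x$.
\end{itemize}
So uniform convergence on this range reduces to the scalar statement
\[\grae \depi(g) = -\tfrac1\slam\czn \e^{-\slam y}g_i(y)\ud y .\]
In $\depi(g)$, the term $\slem\int_0^{c_i\eps}\sinh\slam(c_i\eps-y)g_i(y)\ud y$ is $O(\eps^2)$. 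The last term tends to the required limit, since $g_i$ is uniformly continuous and so $\int_0^\infty \e^{-\slam y}g_i(y+c_i\eps)\ud y\to\int_0^\infty \e^{-\slam y}g_i(y)\ud y$. The remaining term satisfies $\cepi(g)\sinh\slam c_i\eps=\cepi(g)\,O(\eps)$, which tends to $0$ precisely under the hypothesis $\eps\cepi(g)\to0$.

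\emph{Inner part, $x< c_i\eps$.} Here $(\rlae^0 g)_i(x)=\cepi(g)\sinh\slam x+O(\eps^2)$, and $|\cepi(g)\sinh\slam x|\le \mathrm{const}\cdot\eps|\cepi(g)|\to0$. Also $(\rla^0 g)_i(x)\to0$ uniformly for $x<c_i\eps$, because $(\rla^0g)_i(0)=0$ and $(\rla^0 g)_i$ is uniformly continuous. Hence the difference tends to $0$ uniformly on this range too. This proves strong convergence for each $\lam>0$.

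\emph{Converse.} Suppose $\eps\cepi(g)\not\to0$ for some $i$ and $g$. Evaluate both resolvents at the membrane point $x=c_i\eps$: $(\rla^0 g)_i(c_i\eps)\to 0$, whereas $(\rlae^0 g)_i(c_i\eps)=\cepi(g)\sinh\slam c_i\eps+o(1)$. Along a subsequence this is bounded away from $0$, or unbounded; in either case it does not tend to $0$. Since $c_i\eps\to0$ and $\rla^0 g$ is continuous, uniform convergence $\rlae^0g\to\rla^0g$ would force $(\rlae^0 g)_i(c_i\eps)\to(\rla^0 g)_i(0)=0$, a contradiction. So convergence fails.

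The main obstacle is the bookkeeping in $\depi(g)$. One must verify that $\cepi(g)$ enters the outer part only through the product $\cepi(g)\sinh\slam c_i\eps\approx \slam c_i\,\eps\,\cepi(g)$, so that the single hypothesis $\eps\cepi(g)\to0$ controls everything. The rest consists of routine uniform-continuity estimates.
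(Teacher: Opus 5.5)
Your proof is correct and follows essentially the same route as the paper. You split each edge at $c_i\eps$, reduce the outer part to $\depi(g)\to-\pol C_i(g)$, where only the term $\cepi(g)\sinh\slam c_i\eps$ depends on the hypothesis, show both resolvents are uniformly small on $[0,c_i\eps]$, and disprove convergence in the converse by evaluating at $x=c_i\eps$. The only cosmetic difference is that the paper writes $\rla^0$ itself in the form \eqref{tro:3} with $\pepi=\pol$, so that on $[c_i\eps,\infty)$ the difference is exactly $|\depi(g)-\overline{\depi}(g)|$, with no extra $O(\eps)$ error terms to absorb.
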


\begin{proof}We begin by noting that in the case where all $\pepi$s are equal $\pol$,  the last two conditions in point (b) of the definition of $\dom{\gepo}$ simply say that $f_i$s are to be twice continuously differentiable at $x=c_i\eps$ also. Then, $X_\eps^0$ is nothing else but the minimal Brownian motion. This means that representation \eqref{tro:3} applies also to $\rla^0, \lam >0$, and a short calculation reveals that in this case $\cepi$ coincides with $C_i$ introduced in \eqref{tro:1}. Consequently, by the definition,  $\depi (g)$ takes the form $\frac 1\slam \int_0^{c_i\eps} \e^{\slam (y-c_i\eps)} g_i(y)\ud y - \frac 1\slam \int_0^\infty \e^{-\slam (y+c_i\eps)} g_i(y)\ud y, g \in C(S)$; to distinguish it from the general $\depi (g)$, we will denote this value by $\overline {\depi} (g)$. Clearly, $\grae \overline{\depi} (g)= - \pol C_i(g)$.  

But, by assumption and the definition of $\depi$, $\grae \depi(g) = - \pol C_i$ also. It follows that, for each $i\in \kad$, 
\begin{equation*}%\label{lb:1} 
\sup_{x\ge c_i\eps}\big |(\rlae^0 g)_i(x) - (\rla^0 g)_i(x)\big| = |\depi (g) - \overline{\depi}(g)| \underset {\eps \to 0} \longrightarrow  0. \end{equation*}
Moreover, again by assumption, 
\begin{align*} \max_{i \in \kad} \sup_{x\in [0,c_i\eps]} & \big |(\rlae^0 g)_i(x)  \big |\\  &\le  \cepi (g) \sinh \slam c_i \eps  
 +\slem \|g\|   \int_0^{c_i\eps} \sinh \slam y \ud y \underset {\eps \to 0} \longrightarrow  0.\end{align*}
Since, as $\eps \to 0$,   $\max_{i \in \kad} \sup_{x\in [0,c_i\eps]} |(\rla^0 g)_i(x)| $ also converges to $0$ (because $\rla^0 g$ is a continuous function on $S$ which vanishes at $\zero$), 
the last two displays complete the proof of the first part of the lemma.

Now, if condition $\grae \eps \cepi(g) =0$ does not hold for some $i\in \kad, \lam >0$ and $g\in \coss$, there is a $\delta >0 $ and $\jcg{\eps_n}$ such that $\gra \eps_n = 0$ and $|C_{\eps_n,i} (g) \sinh \slam c_i \eps_n   |\ge \delta, n \ge 1$. Hence, $\limsup_{n\to \infty} |D_{\eps_n,i} (g)- \overline{D_{\eps_n,i} }(g)| \ge \delta $.  This, however, by 
\( \|\rlae^0 g - \rla^0 g \| \ge  |(\rlae^0 g)_i (c_i\eps) - (\rla^0 g)_i(c_i\eps)| = |\depi (g)- \overline {\depi}(g)|\)
excludes the possibility of having $\grae \|\rlae^0 g - \rla^0 g \|=0$. 
 \end{proof}

We note in passing that, if the existence of $p_i \coloneqq \grae p_{\eps,i}, i \in \kad $ is granted,  condition  $\grae \eps \cepi (g)=0$ for all $i\in \kad$ and $g\in \coss$ may be violated only if at least one $p_i $ is $1$ --- see Lemma \ref{lemik:3} (a).

\subsection{Proof of Theorem \ref{main}}

We are finally in a position to prove Theorem \ref{main}. We begin by noting that all four scenarios of the theorem correspond to these of Lemma \ref{lemik:3}. 
In particular, the lemma shows that in all the scenarios $\grae \eps \cepi (g) =0, g \in \coss, i \in \kad$. Hence, by Lemma \ref{lemik:4}, $\grae \rlae^0 = \rla^0$ strongly, and this implies, by the definition of $\elame$, that $\grae \elame = \elam $, for all $\lam >0$.  Since Lemma \ref{lemik:3} establishes also the convergence of $E_\eps (g)$ to $E(g)$, \eqref{tro:4} tells us that $\grae \rlae g = \rla^0g + E(g) \elam, g \in \coss, \lam >0$. Therefore, by \eqref{tro:2} and the  Trotter--Kato--Neveu--Macke\-vi\v cius theorem, it remains to recognize, in each case separately, $E(g)$ as that corresponding to the resolvent of the appropriate $\genm$, as spelled out in \eqref{tro:3a}.

  For example, in the case (a), if at least one $p_i\alpha_i$ is positive,  we can divide the numerator and the denominator of $E(g)$ by the normalizing constant $\gamma \coloneqq \sum_{j\in \kad} \frac {p_j\alpha_j}{1-p_j}$ to see that the $E(g)$ characterizes $\genm$ with 
\( \wt \alpha_i \coloneqq  \gamma^{-1} \tfrac {p_i\alpha_i}{1-p_i}\) and  \(\wt \beta \coloneqq  \gamma^{-1} \beta\). However, if all $p_i\alpha_i$s vanish, the formula for $E(g)$ reduces to $E(g) = \frac {g(\zero)}\lam$, which we recognize as that corresponding to the stopped process. We omit the discussion of the remaining cases.

\section{Proof of Theorem \ref{dodatek}}

\newcommand{\fil}{ f_i^{\mathpzc l}}
\newcommand{\fjr}{ f_{3-i}^{\mathpzc r}}
\newcommand{\fjl}{ f_{3-i}^{\mathpzc l}}
\newcommand{\gir}{ g_i^{\mathpzc r}}
\newcommand{\gilp}{ g_{i,+}^{\mathpzc l}}
\newcommand{\gilm}{ g_{i,-}^{\mathpzc l}}
\newcommand{\gjlp}{ g_{j,+}^{\mathpzc l}}
\newcommand{\gjlm}{ g_{j,-}^{\mathpzc l}}
\newcommand{\hir}{ h_i^{\mathpzc r}}
\newcommand{\hil}{ h_i^{\mathpzc l}}
\newcommand{\fire}{ f_{\eps,i}^{\mathpzc r}}
\newcommand{\file}{ f_{\eps,i}^{\mathpzc l}}
\newcommand{\gire}{ g_{\eps,i}^{\mathpzc r}}
\newcommand{\gilpe}{ g_{\eps,i,+}^{\mathpzc l}}
\newcommand{\gilme}{ g_{\eps,i,-}^{\mathpzc l}}
\newcommand{\girz}{ g_{0,i}^{\mathpzc r}}
\newcommand{\gilpz}{ g_{0,i,+}^{\mathpzc l}}
\newcommand{\gilmz}{ g_{0,i,-}^{\mathpzc l}}
\newcommand{\firz}{ f_{0,i}^{\mathpzc r}}
\newcommand{\filz}{ f_{0,i}^{\mathpzc l}}

\subsection{Proof of part (a)} \label{sec:popa}

\begin{figure}
 \begin{tikzpicture}[scale=0.8]
 \draw [->] (-3,0) -- (5.5,0);
\draw [->] (0,-0.8)-- (0,2.3);
\draw [violet, thick, densely dashed] plot [smooth] coordinates {(-3,-0.7)(-2.5,-0.7) (-1.5,-0.5)(-1,0.6) (-0.3,0.6) (0,0.4)};
\draw [violet, thick,  densely dashed]  plot [smooth] coordinates {(1.75,0.78) (1.9,0.7)  (2.45,0.5)  (3.5,0.4) (4,-0.1)(5.5,0.5)};
\draw [NavyBlue, thick,  densely dashed]  plot [smooth] coordinates {(1.75,0.8)(1.5,0.95)  (0.5,1.9) (0,1.9)(-1,1.5)(-3,1.7)};
\draw [NavyBlue,thick]  plot [smooth] coordinates {(1.75,0.8) (2,0.85)  (3,1.3) (5,1)(5.5,0.9)};
\draw [violet,thick]  plot [smooth] coordinates {(0,0.4)(0.3,0.25)  (1,0.9)(1.4,0.9)(1.75,0.79)};
\node [above] at (0.8,0.1) {$\fil$};
\node [above] at (-1,1.55) {$\fir$};
\draw [ thick,-] (1.75,-0.11) -- (1.75,0.11);
\node [above] at  (0.1,-0.4) {\small{{$0$}}}; 
\node [above] at  (1.75,-0.4) {\small{{$\varepsilon$}}}; 
\end{tikzpicture}

\caption{\footnotesize{Two extensions of $f_i \in C[0,\infty]$ (solid line): the violet curve is the extension of its left part, and the blue curve is the extension of its right part. Notably, as spelled out in \eqref{dodatek:1}, these extensions depend on both $f_1$ and $f_2$. }}\label{rys3} 
\end{figure}

Our argument is based on the Lord Kelvin method of images \cites{kosinusy,tombatty,bobrat,aberman}, which provides semi-explicit formulae for the cosine families involved (cf. our formula \eqref{dodatek:8}). The method hinges on constructing very particular extensions of members $(f_1,f_2)$ of $\coss$, extensions that are shaped by the boundary conditions. To elaborate, we think of each $f_i$ as composed of two parts, left and right, the left being the restriction of $f_i$ to the interval $[0,\eps]$ and the right being the extension of $f_i$ to $[\eps, \infty)$, and search for continuous extensions of these two parts to the entire real line (see Figure \ref{rys3}).  All in all, given $(f_1,f_2)\in \coss $ we search for four functions, 
\[\fil ,\fir, \qquad i=1,2,\]
on the entire line, where  the superscript $\mathpzc l$ stands for `left' and $\mathpzc r$ for `right', and $\fil$ is the extension of the left part of $f_i$ whereas $\fir$ is the extension of the right part of $f_i$. 
We require that these functions belong to $C[-\infty, \infty]$, the space of continuous functions on $\R$ that possess finite limits at $\pm \infty$, and that the following conditions are satisfied: 
\begin{align} 
 \fil (-x) &= \fjl (x),  \label{dodatek:1} \\
 \pepi \fir (\eps - x) & = \pepi f_i (x+\eps) + (1-\pepi) \fil (\eps - x)  - (1-\pepi) \fil (x+\eps), \nonumber \\
 \fir (\eps - x) &= -f_i (x+\eps) + \fil (\eps - x) + \fil (x+\eps), \qquad i =1,2, x\ge 0. \nonumber
\end{align}
Here, and throughout the proof of part (a), $\eps >0$ is fixed and $\pepi$s are parameters characterizing the domain of $\gep$. 
%(To explain: the first equation above means that $f_1^{\mathpzc l} (-x) = f_2^{\mathpzc l} (x) $ and  $f_2^{\mathpzc l} (-x)$ $ = f_1^{\mathpzc l} (x) $ for all $x\ge 0$. We will use such shortened notation in what follows also.)  

\begin{figure}[t!]
\begin{tikzpicture}[scale=0.75]
\draw[->] (-3,0) -- (5.5,0);
\draw[->] (0,-0.5) -- (0,2.5);
\draw [ thick,-] (1.75,-0.11) -- (1.75,0.11);
\node [above] at  (0.1,-0.4) {\small{{$0$}}}; 
\node [above] at  (1.75,-0.4) {\small{{$\varepsilon$}}}; 
\draw [NavyBlue, thick, densely dashed]  plot [smooth] coordinates {(1.75,0.8)(1.5,0.95)  (0.5,1.9) (0,1.9)(-1,1.5)(-3,1.7)};
\draw [Green, thick, densely  dashed]  plot [smooth] coordinates {(1.75-1.75,0.8)(2-1.75,0.95)  (3-1.75,1.9) (3.5-1.75,1.9)(4.5-1.75,1.5)(6.5-1.75,1.7)(5.5,1.75)}; 
\draw [NavyBlue,thick]  plot [smooth] coordinates {(1.75,0.8) (2,0.85)  (3,1.3) (5,1)(5.5,0.9)};
%\draw[gray] (1.75,-0.5)--(1.75,2.5);
\draw[gray,->](0.25,2.1) .. controls (0.65,2.5) and (1.05,2.5) ..(1.45,2.1);
\node [above] at (-1,1.55) {$\fir$};
\node [above] at (2.2,1.8) {$\gir$};
\end{tikzpicture}\caption{\footnotesize{The function $\gir$ (green curve) is constructed from a portion of the function $\fir$ (blue curve),  by first shifting the latter to the left by $\eps$ and then reflecting it about the $y$ axis. Vice versa, the knowledge of $\gir$ is the knowledge of $\fir$, because the right side of $\fir $ is given.}}\label{rys_r}
\end{figure}

In Lemma \ref{lem:d1}, further down, we will establish that such extensions exist and are uniquely determined. Before we do that, however, we need to discuss the operation of concatenating graphs of functions. Given $a >0$ and a possibly infinite $b>0$, and two  functions, say, $k\in C[0,a]$ and $g \in C[0,b]$, such that $g(0)= k(a)$ we can consider concatenation of $k$ and $g$, denoted simply $kg$, the member of $C[0,a+b]$ given by 
\[ kg (x) = k(x) [ 0\le x\le a] + g(x-a) [a<x\le a+b]. \] 
 With fixed $k$, the map $g \mapsto kg$ is nonlinear, and defined only for $g$ such that $g(0) =k(a)$, but --- remarkably --- has not fallen far from the tree of linear operators. To wit, if $g_1\in C[0,b]$ and $g_2\in C[0,b]$ are such that $g_1(0)=g_2(0)= k(a)$ and numbers $\alpha$ and $\beta$ add up to $1$, we have 
\begin{equation}\label{dodatek:2} k (\alpha g_1 + \beta g_2) = (\alpha k)(\alpha g_1) + (\beta k)(\beta g_2) = \alpha (kg_1) + \beta (kg_2). \end{equation}
We will also use the fact that, as it is easy to see, 
\begin{equation}\label{dodatek:3} \|kg\|_{C[0,a+b]} = \max (\|k\|_{C[0,a]}, \|g\|_{C[0,b]}). \end{equation}

 \begin{lemma}\label{lem:d1} For any $f= (f_1,f_2)\in \coss$,  extensions $\fir$ and $\fil, i =1,2$ described above exist and are uniquely determined. Moreover, 
\begin{equation} \label{dodatek:3a} \|\fil\|_{C[-\infty,\infty]}, \|\fir\|_{C[-\infty,\infty]}  \le (11 + \tfrac {12}{1-|q_\eps|}) \| f \|_{C(S)}
%\max_{i=1,2} \|g_i\|_{C[0,\infty]}
, \quad  i=1,2.  \end{equation}

  \end{lemma}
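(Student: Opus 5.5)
The plan is to turn the three conditions \eqref{dodatek:1} into an explicit step-by-step recursion that builds $\fil$ interval by interval, of length $2\eps$, and then to read $\fir$ off from $\fil$.

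\emph{The recursion.} Throughout, $f_i$ is given on $[0,\infty)$, $\fil=f_i$ on $[0,\eps]$, and $\fir=f_i$ on $[\eps,\infty)$. Multiply the third condition by $\pepi$ and subtract it from the second. This eliminates $\fir$ and gives, for $x\ge 0$,
\[ \fil(\eps+x) = 2\pepi f_i(\eps+x) + (1-2\pepi)\,\fil(\eps-x), \qquad i=1,2. \]
Conversely, if $\fil$ satisfies this identity and the reflection rule $\fil(-x)=\fjl(x)$, then defining $\fir(\eps-x)$ by the third condition makes the second hold automatically. So everything reduces to constructing $f_1^{\mathpzc l},f_2^{\mathpzc l}$.

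\emph{Construction.} Let $I_n=[(2n-1)\eps,(2n+1)\eps]$ for $n\ge 0$ (so $I_0=[-\eps,\eps]$), and let $-I_n$ be its mirror image.
\begin{itemize}
\item On $I_0$, $\fil$ is forced: it equals $f_i$ on $[0,\eps]$ and $f_j(-\cdot)$ on $[-\eps,0]$, by the reflection rule with $j=3-i$.
\item If $\fil$ is known on $\bigcup_{m\le n} \pm I_m$, the recursion gives $\fil$ on $I_{n+1}$, since $\eps+x\in I_{n+1}$ exactly when $\eps-x\in -I_n$.
\item The reflection rule then gives $\fil$ on $-I_{n+1}$ from $\fjl$ on $I_{n+1}$.
\end{itemize}
Every value is forced at each step, so uniqueness is immediate. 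For continuity, at the common endpoint $(2n+1)\eps$ the two formulas agree because they come from the same recursion evaluated at a point already fixed continuously. The only junction needing a direct check is $x=\eps$: there the recursion reads $\fil(\eps)=2\pepi f_i(\eps)+(1-2\pepi)f_i(\eps)$, which holds. The concatenation formalism \eqref{dodatek:2}--\eqref{dodatek:3} is convenient for writing the pieces on successive $I_n$ and for taking sup norms piece by piece.

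\emph{Bounds and limits at $\pm\infty$.} Let $a_n=\max_{i=1,2}\|\fil\|_{C(\pm I_n)}$. On $I_{n+1}$,
\[ \fil = 2\pepi f_i + (1-2\pepi)\,\fjl\circ(\text{reflection}) ,\]
and $\fjl$ on $I_n$ is in turn $2p_{\eps,j} f_j+(1-2p_{\eps,j})\,\fil(\cdot)$ on $-I_{n-1}$, and so on. Grouping the steps in pairs, each pair contributes the factor $(1-2\pepi)(1-2p_{\eps,j})=q_\eps$ and an additive error of at most $2\|f\|+2\|f\|$. This gives
\[ a_{n+2}\le 4\|f\|+|q_\eps|\,a_n, \qquad\text{so}\qquad \sup_n a_n\le \|f\|\bigl(1+\tfrac{4}{1-|q_\eps|}\bigr) \]
up to a bounded number of initial terms. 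From the third condition, $\|\fir\|\le \|f\|+2\sup|\fil|$. This yields \eqref{dodatek:3a} with room to spare, since the stated constant $11+\frac{12}{1-|q_\eps|}$ is generous.

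The main obstacle is the existence of finite limits at $\pm\infty$, needed for $\fil,\fir\in C[-\infty,\infty]$. I would handle it by writing $f_i=f_i(\infty)+h_i$ with $h_i\to 0$. The constant part solves the recursion with a periodic-in-$n$ two-step affine map $a\mapsto c+q_\eps a$, whose iterates converge geometrically because $|q_\eps|<1$. The limits of the two interlaced subsequences then coincide with the fixed points of the corresponding affine relations, and I would check that these fixed points agree when passing continuously through each block. The $h$ part is a convolution of a summable geometric kernel with a null sequence, hence tends to $0$. Once $\fil$ has limits at $\pm\infty$, the third condition transfers them to $\fir$.
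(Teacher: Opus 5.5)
Your proof is correct. After a first step it shares with the paper, it takes a different route. Both you and the paper eliminate $\fir$ to obtain $\fil(\eps+x)=2\pepi f_i(\eps+x)+(1-2\pepi)\fil(\eps-x)$, which is \eqref{dodatek:6}. Both then recover $\fir$, and the second line of \eqref{dodatek:1}, from the third line.

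The paper composes this relation with its counterpart for $j=3-i$ and encodes the result, via concatenations, as the map $O_{\eps,i}$ of \eqref{dodatek:7a} on the closed set $X_{\eps,i}\subset C[0,\infty]$. It then applies the Banach fixed point theorem. You instead unroll the recursion block by block, and this buys you three things:
\begin{itemize}
\item Uniqueness is immediate, since every value is forced, even among discontinuous functions.
\item Compatibility of $f_1^{\mathpzc l}$ and $f_2^{\mathpzc l}$ is automatic, because you build them simultaneously. The paper gets each $\gilp$ from its own contraction, so it needs Step 3 to check that the two fixed points really satisfy \eqref{dodatek:6}.
\item You get a sharper constant. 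Since $a_0\le\|f\|$ and $a_1\le 3\|f\|$ lie below $4\|f\|/(1-|q_\eps|)$, and this level is invariant under $a\mapsto 4\|f\|+|q_\eps|a$, you actually have $\sup_n a_n\le\frac{4}{1-|q_\eps|}\|f\|$ with no exceptional initial terms. Hence $\|\fir\|\le(1+\frac{8}{1-|q_\eps|})\|f\|$.
\end{itemize}
The paper's fixed-point formulation buys two other things. First, limits at $\pm\infty$ come for free from the completeness of $X_{\eps,i}$ in $C[0,\infty]$, which is exactly the step that costs you a separate argument. Second, the same set-up is reused in Lemma \ref{acik}, where a parametric Banach theorem gives convergence of $g^{\mathpzc l}_{\eps,i,+}$ as $\eps\to0$.

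Two small tightenings of your limit argument:
\begin{itemize}
\item The constant pair $(f_1(\infty),f_2(\infty))$ need not lie in $\coss$. So apply the linear recursion to arbitrary pairs of bounded functions; the pieces may then jump at even multiples of $\eps$ and only the sum is continuous. This is harmless.
\item For fixed $i$, both interlaced subsequences obey the same affine map $a\mapsto 2\pepi f_i(\infty)+2p_{\eps,j}(1-2\pepi)f_j(\infty)+q_\eps a$. They therefore share its fixed point, and there is nothing further to check. Equivalently, set $\Phi=(f_1^{\mathpzc l},f_2^{\mathpzc l})$ on $[0,\infty)$. Then $\Phi(y)=F(y)+M\Phi(y-2\eps)$ for $y\ge 2\eps$, with $F=(2p_{\eps,1}f_1,2p_{\eps,2}f_2)$ and $M^2=q_\eps\,\mathrm{Id}$. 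Summing the geometric series gives $\lim_{y\to\infty}\Phi(y)=(\mathrm{Id}-M)^{-1}F(\infty)$ directly.
\end{itemize}
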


\begin{proof}Instead of searching for $\fir$s and $\fil$s it will be convenient to search for the functions defined on $[0,\infty)$ as follows (see Figures \ref{rys_r} and \ref{rys_l}): %\ab{Another good figure is needed.} 
\begin{align*}
\gir (x) & \coloneqq \fir (\eps - x), \\
\gilp (x) & \coloneqq \fil (\eps + x), \\
\gilm (x) & \coloneqq \fil (- x), \qquad x\ge 0, i=1,2. 
\end{align*}
 We have thus produced six unknowns out of four, but the advantage is that they are all defined on the same domain. We note that $\gilp$ is the right side of the unknown part of $\fil$ and $\gilm$ is the left side of the unknown part of this function.

\begin{figure}[t!]
 \begin{tikzpicture}[scale=0.75]
 \draw [->] (-2.5,0) -- (5,0);
\draw [->] (0,-1.1)-- (0,1.6);
\draw [violet, thick, densely dashed] plot [smooth] coordinates {(-2.5,-0.7) (-1.5,-0.5)(-1,0.6) (-0.3,0.6) (0,0.4)};
\draw [violet, thick, densely dashed]  plot [smooth] coordinates {(1.75,0.78) (1.9,0.7)  (2.45,0.5)  (3.5,0.4) (4,-0.1)(5,0.5)};
\draw [Orange, thick, densely dashed]  plot [smooth] coordinates {(1.75-1.75,0.78) (1.9-1.75,0.7)  (2.45-1.75,0.5)  (3.5-1.75,0.4) (4-1.75,-0.1)(5.5-1.75,0.5)(7.25-2.25,0.9)};
\draw [violet,thick]  plot [smooth] coordinates {(0,0.4)(0.3,0.25)  (1,0.9)(1.4,0.9)(1.75,0.79)};
\node [above] at (1.2,0.95) {$\fil$};
\node [above] at (4.5,0.7) {$\gilp$};
\draw [ thick,-] (1.75,-0.11) -- (1.75,0.11);
\node [above] at  (0.1,-0.4) {\small{{$0$}}}; 
\node [above] at  (1.75,-0.4) {\small{{$\varepsilon$}}}; 
\draw[gray,->] (4,-0.2).. controls (3.6,-0.6)and  (3,-0.6) ..(2.4,-0.2);
\end{tikzpicture}
 \begin{tikzpicture}[scale=0.75]
 \draw [->] (-2.5,0) -- (5,0);
\draw [->] (0,-1.1)-- (0,1.6);
\draw [violet, thick, densely dashed] plot [smooth] coordinates {(-2.5,-0.7) (-1.5,-0.5)(-1,0.6) (-0.3,0.6) (0,0.4)};
\draw [Rhodamine, thick, densely dashed] plot [smooth] coordinates {(5,-0.6)(3,-0.7)(2.5,-0.7) (1.5,-0.5)(1,0.6) (0.3,0.6) (0,0.4)};
\draw [violet, thick, densely dashed]  plot [smooth] coordinates {(1.75,0.78) (1.9,0.7)  (2.45,0.5)  (3.5,0.4) (4,-0.1)(5,0.5)};
\draw [violet,thick]  plot [smooth] coordinates {(0,0.4)(0.3,0.25)  (1,0.9)(1.4,0.9)(1.75,0.79)};
\node [above] at (1.2,0.95) {$\fil$};
\node [above] at (2.8,-0.75) {$\gilm$};
\draw [ thick,-] (1.75,-0.11) -- (1.75,0.11);
\node [above] at  (0.1,-0.4) {\small{{$0$}}}; 
\node [above] at  (1.75,-0.4) {\small{{$\varepsilon$}}}; 
\draw[gray,->](-1,-0.4) .. controls (-0.3,-1) and (0.3,-1) ..(1,-0.4);
\end{tikzpicture}
\caption{\footnotesize{The functions $\gilp$ (orange curve) and $\gilm$ (pink curve) are constructed from portions of the function $\fil$ (violet curve), by shifting and reflecting the latter about the $y$ axis, respectively. Vice versa, $\gilp$ and $\gilm$ determine $\fil$.}}\label{rys_l}
\end{figure}

{\bf Step 1. Rewriting \eqref{dodatek:1}.} Let us express relations \eqref{dodatek:1} in terms of $g$s.  To begin with,  by the definition of $\gilm$ and the first relation in \eqref{dodatek:1}, $\gilm (x) = \fil (-x) = \fjl (x), x \ge 0$, and so the definition of $\fil $ renders 
\begin{equation} \label{dodatek:4} \gilm = k_{j}\gjlp , \qquad  i=1,2,j=3-i, \end{equation}
where $k_j $ is the restriction of $f_j$ to the interval $[0,\eps]$, $j=1,2$. Also, because $\fil (\eps -x) = f_i(\eps - x)$ for $x \in [0,\eps]$ and $\fil (\eps -x) = \gilm (x-\eps)$ for $x >\eps $, we see that the function $[0,\infty)\ni x \mapsto \fil (\eps -x)$ coincides with $\hat k_i \gilm $ where $\hat k_i $ is the member of $C[0,\eps]$ given by $\hat k_i (x) = f_i(\eps - x), x \in [0,\eps]$. This together with \eqref{dodatek:4} shows that the function in question equals 
\[ \hat k_i (k_j \gjlp) = k_{ij} \gjlp, \qquad j=3-i, i =1,2, \]
where $k_{ij}\in C[0,2\eps]$ is the concatenation of $\hat k_i \in C[0,\eps]$ and $k_{j}\in C[0,\eps]$. (We note in passing that the manipulations on the left-hand side above are possible because $\hat k_i (\eps) = f_i(0)=f_j(0) = k_j(0)$ and for the same reason $k_{ij}$ belongs to $C[0,2\eps]$.) Hence, the last two lines of \eqref{dodatek:1} may be rewritten as 
\begin{align} 
\pepi \gir &= \pepi T_\eps f_i + (1-\pepi) k_{ij}\gjlp - (1-\pepi)\gilp, \nonumber \\
 \gir &= - T_\eps f_i + k_{ij}\gjlp + \gilp, \qquad i =1,2, j=3-i,\label{dodatek:5}
\end{align}
where $T_\eps f_i$ is the translation of $f_i$ by $\eps$ to the left.  To summarize: we have rewritten \eqref{dodatek:1} as \eqref{dodatek:4} and \eqref{dodatek:5}.  Also, the requirement that $\fir $ and $\fil$ belong to $C[-\infty,\infty]$ is now restated as the requirement that $\gilp, \gilm $ and $\gir$ belong to $C[0,\infty]$.

{\bf Step 2. Uniqueness of extensions.} 
By eliminating $\gir, i=1,2$ from the display above we obtain 
\begin{equation} \gilp = 2\pepi T_\eps f_i + (1-2\pepi) k_{ij} \gjlp, \qquad i=1,2,j=3-i. \label{dodatek:6} \end{equation}
This in turn, by \eqref{dodatek:2}, implies 
\begin{equation} \gilp = 2\pepi T_\eps f_i + 2p_{\eps,j} (1-2\pepi)(k_{ij}T_\eps f_j) + q_\eps (\overline{k_{ij}} \gilp), \quad i=1,2,j=3-i, \label{dodatek:7} \end{equation}
where $q_\eps$ has been introduced in \eqref{cocf:0} and $\overline{k_{ij}}$, a member of $C[0,4\eps]$, is the concatenation of $k_{ij}\in C[0,2\eps]$ and $k_{ji}\in C[0,2\eps]$. 

Let $X_{\eps,i}$ be the set of functions $g\in C[0,\infty]$ such that $g(0)=f_i(\eps)$. As a closed subset of the Banach space $C[0,\infty]$, $X_{\eps,i}$ is a complete metric space. Let us consider the map $O_{\eps,i}$ 
%defined in $X_{\eps,i}$, with values in the same space, 
that to a $g\in X_{\eps,i}$ assigns %another members of $X_{\eps,i}$ equal to 
\begin{equation}\label{dodatek:7a} O_{\eps,i} (g) \coloneqq 2\pepi T_\eps f_i + 2p_{\eps,j} (1-2\pepi) (k_{ij}T_\eps f_j) + q_\eps(\overline{k_{ij}}g).\end{equation}
Since, as it is easy to see,  the value of $O_{\eps,i} (g) $ at $0$ equals $f_i(\eps)$, $O_{\eps,i}$ maps $X_{\eps,i}$ into itself.  Moreover, for any $g^\flat$ and $g^\sharp$ in $X_{\eps,i}$, 
\begin{align*}
\| O_{\eps,i} (g^\flat) - O_{\eps,i} (g^\sharp)\| & = |q_\eps| \, \|  \overline{k_{ij}}g^\flat   - \overline{k_{ij}}g^\sharp \|  =  |q_\eps| \, \| g^\flat   - g^\sharp \|.   
\end{align*}
Now, by assumption, $|q_\eps|<1$, and so $O_{\eps,i}$ has a unique fixed point. But \eqref{dodatek:7} says that $\gilp$ is a fixed point of $O_{\eps,i}$. Hence, $\gilp, i=1,2$ are determined uniquely and, by \eqref{dodatek:4} and \eqref{dodatek:5}, so are $\gilm$ and $\gir, i=1,2$. Since $\gilm, \gilp $ and $\gir$, $i=1,2$ determine (and are determined by) $\fir$ and $\fil$, $i=1,2$, Step 2 is completed.

{\bf Step 3. Existence of extensions.} Now, let $\gilp$ be defined as the fixed point of $O_{\eps,i}$. Moreover, let the second line of \eqref{dodatek:5} define $\gir$ and \eqref{dodatek:4} define $\gilm, i=1,2$. In particular, since $O_{\eps,i}$ maps $X_{\eps,i}$ into itself, $\gilp (0) = f_i(\eps)$, implying that the right side of the unknown part of $\fil $ fits to the left part of $f_i$ at $\eps$; similar remarks apply to $\gilm$ and $\gir, i=1,2$.

Our goal will thus be reached once we establish the first line in \eqref{dodatek:5}; \eqref{dodatek:4} and the second line  in \eqref{dodatek:5} are satisfied automatically.  To this end, we note first that \eqref{dodatek:6} holds. Indeed, 
a short calculation shows that the right-hand side of \eqref{dodatek:6} is a fixed point of $O_{\eps,i}$ (because $\gjlp$ is a fixed point of $O_{\eps,j}$);  since $\gilp$ is the unique fixed point of $O_{\eps,i}$, this proves \eqref{dodatek:6}. With 
\eqref{dodatek:6} under our belt, we easily complete the argument: by substituting the right hand side of the second line of \eqref{dodatek:5} for $\gir$ in the first line, we realize that this line is nothing but \eqref{dodatek:6}.

{\bf Step 4. Relation \eqref{dodatek:3a}.} To prove \eqref{dodatek:3a} we recall that the Banach Fixed Point Theorem comes with an estimate of the distance between the fixed point and the iterates of the map involved (see e.g. \cite{jachymski}*{p. 5}).  In the case of $O_{\eps,i}$ (and the first iterate) the estimate reads 
\[ \| \gilp -  O_{\eps,i} g \|_{C[0,\infty]} \le \tfrac 1{1-|q_\eps|} \|g - O_{\eps,i} g\|_{C[0,\infty]} , \qquad g \in X_{\eps,i}. \] 
Next, we note that the definition of $O_{\eps,i}$ involves a linear combination of three functions: the first two of these are bounded in norm by $\|f\|_{C(S)}$, and the third is bounded in norm by the maximum of $\|g\|_{C[0,\infty]}$ and $\|f\|_{C(S)}$. Since the absolute values of the  coefficients of the combination add up to at most $5$, this shows that 
\( \|O_{\eps,i} g \|_{C[0,\infty]} \le 5 \|f\|_{C(S)} \)
as long as $\|g\|_{C[0,\infty]} = |f_i(\eps)|\le \|f\|_{C(S)}$, and we may take for example $g$ given by $g(x)= f_i(\eps), x \ge 0$. For such $g$, the display above renders 
\[ \|\gilp\|_{C[0,\infty]} \le  (5 + \tfrac 6{1-|q_\eps|}) \| f \|_{C(S)}. \]
This in turn, by \eqref{dodatek:4} and the second line of \eqref{dodatek:5}, yields 
\[  \|\gilm\|_{C[0,\infty]} \le  (5 + \tfrac 6{1-|q_\eps|}) \| f \|_{C(S)} \, \text{ and }  \,  \|\gir\|_{C[0,\infty]} \le  (11 + \tfrac {12}{1-|q_\eps|}) \| f \|_{C(S)}. \] 
The proof of \eqref{dodatek:3a} is thus completed, because $\|\fil\|_{C[-\infty,\infty]}$ does not exceed the maximum of $\|f_i\|_{C[0,\infty]}$, $\|\gilm\|_{C[0,\infty]}$ and $\|\gilp\|_{C[0,\infty]}$, and a~similar remark applies to the norm of $\fir$.  \end{proof}

With the information on extensions at our disposal, we can introduce a~family, say, $\{\overline C_\eps (t), t \in \R\}$, of linear operators in $\coss$ that will soon turn out to be a strongly continuous cosine family. To this end, given an $f=(f_1,f_2)\in \coss$ and a $t\in \R$, we construct the coordinates $(\overline C_\eps (t)f)_i, i =1,2$  of $\overline C_\eps (t)f $ by cutting the graphs of $C(t)\fil $ and $C(t)\fir$, which are members of $C[-\infty,\infty]$, into smaller pieces and then gluing some of them together according to the following prescription: 
\begin{equation}\label{dodatek:8} (\overline C_\eps (t)f)_i (x) = C(t)\fil(x) [0\le x\le \eps] +  C(t)\fir(x) [ x> \eps], \quad x\ge 0;  
\end{equation} 
as before $\{C(t), t\in \R\}$ is the basic cosine family defined in Section \ref{sec:cf}. 

To clear up any uncertainty as to continuity of $(\overline C_\eps (t)f)_i$, we note that the continuity of this function at $x=\eps$ is guaranteed (for any $t$) by the third line in \eqref{dodatek:1}. With that remark, the definition itself makes it amply clear that $ (\overline C_\eps (t)f)_i, i =1,2$ are elements of $C[0,\infty]$.  Moreover, the first line in \eqref{dodatek:1} implies that $(\overline C_\eps (t)f)_1 (0) = (\overline C_\eps (t)f)_2(0)$, that is, that $\overline C_\eps (t)f$ belongs to $\coss$, whereas \eqref{dodatek:1} in its entirety (when combined with uniqueness of extensions) reveals that the map assigning extensions to elements of $C(S)$ is linear. Finally, strong continuity of the basic cosine family implies that  $\{\overline C_\eps (t), t \in \R\}$ is a strongly continuous family too, and 
by \eqref{dodatek:3a}, 
\begin{equation}\label{dodatek:9} \|\overline C_\eps (t)f \| \le (11 + \tfrac {12}{1-|q_\eps|}) \|f\|, \qquad t\in \R, f \in \coss;  \end{equation}
this is to say that the operators $\overline C_\eps (t)$ are equibounded. 

\begin{proposition}\label{generowanie} $\{\overline C_\eps (t), t \in \R\}$ is a strongly continuous cosine family with generator $2\gep$.   \end{proposition}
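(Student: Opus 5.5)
The plan is to show three things in turn: the cosine functional equation, that the generator contains $2\gep$, and that it is no larger than $2\gep$.

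\textbf{Cosine equation.} The key observation is that the extension map is compatible with the basic cosine family. Fix $f\in\coss$ and $t\in\R$, and put $h\coloneqq \overline C_\eps(t)f$. I claim the extensions of $h$ (in the sense of Lemma \ref{lem:d1}) are $h_i^{\mathpzc l}=C(t)\fil$ and $h_i^{\mathpzc r}=C(t)\fir$. The argument goes as follows.
\begin{itemize}
\item Relations \eqref{dodatek:1} are linear in $x\mapsto \fil(\pm x)$, $\fil(\eps\pm x)$, $\fir(\eps\pm x)$ and $f_i(x+\eps)$, and they hold for all $x\ge 0$.
\item It is convenient to first rewrite them as identities valid for all $x\in\R$. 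The first says that $\fil(-\cdot)$ is the other function's left extension. The second and third say that $\fir(\eps-x)+\fir(\eps+x)=\fil(\eps-x)+\fil(\eps+x)$ and $\pepi\fir(\eps-x)+(1-\pepi)\fil(\eps+x)=\pepi\fir(\eps+x)+(1-\pepi)\fil(\eps-x)$, after replacing $f_i(\eps+x)$ by $\fir(\eps+x)$ and $\fil$ appropriately.
\item One must check that these symmetric forms are equivalent to \eqref{dodatek:1} (the $x<0$ case follows from the $x\ge0$ case by symmetry).
\item Since $C(t)$ commutes with reflections $x\mapsto -x$ and $x\mapsto 2\eps-x$, and is linear, these identities are preserved by $C(t)$.
\item Consequently $C(t)\fil$ and $C(t)\fir$ satisfy \eqref{dodatek:1} with data $h$. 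Here one uses \eqref{dodatek:8} and the fact that $C(t)\fil=C(t)\fir$-type identities give $h_i$ on the correct pieces.
\item By the uniqueness part of Lemma \ref{lem:d1}, these are the extensions of $h$.
\end{itemize}
Then $\overline C_\eps(s)\overline C_\eps(t)f$ is built from $C(s)C(t)\fil$ and $C(s)C(t)\fir$, and the cosine equation for $C$ transfers. Strong continuity and $\overline C_\eps(0)=I$ are already noted.

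\textbf{The generator contains $2\gep$.} Rather than work with the generator directly, I will compare resolvents. Let $\mathfrak A$ be the generator of $\overline C_\eps$. The family is equibounded by \eqref{dodatek:9}, so for $\lam>0$ the operator $\lam^{-1}\czn \e^{-\lam t}\overline C_\eps(t)\ud t$ equals $(\lam^2-\mathfrak A)^{-1}$.

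Take $g\in\coss$ and let $u\coloneqq\lam^{-1}\czn \e^{-\lam t}\overline C_\eps(t)g\ud t$. By \eqref{dodatek:8}, $u_i$ equals $G^{\mathpzc l}_i$ on $[0,\eps]$ and $G^{\mathpzc r}_i$ on $(\eps,\infty)$, where $G^{\mathpzc l}_i$ and $G^{\mathpzc r}_i$ are the basic-family resolvents of $g_i^{\mathpzc l}$ and $g_i^{\mathpzc r}$. These lie in $\dom\Delta$ and solve $\lam^2G-G''=g^{\cdot}$. Moreover, by the commutation argument above, they satisfy the symmetric identities.

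Next I differentiate the identities at $x=0$:
\begin{itemize}
\item Differentiating the identity $G^{\mathpzc l}_i(-x)=G^{\mathpzc l}_j(x)$ gives $u_1'(0)+u_2'(0)=0$. This is condition (e) with $\alpha_i=\pol$, $\beta=0$.
\item The second-order derivatives at $0$ agree, which gives (d).
\item The $\pepi$-identity yields $(1-\pepi)u_i'(\eps-)=\pepi u_i'(\eps+)$.
\item The sum identity yields $u_i''(\eps-)=u_i''(\eps+)$.
\end{itemize}
Hence $u\in\dom{\gep}$ and $\lam^2u-2\gep u=g$. So $(\lam^2-\mathfrak A)^{-1}$ maps into $\dom\gep$ and agrees with $(\lam^2-2\gep)^{-1}$.

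\textbf{Equality of domains.} Since $2\gep$ is a Feller-type generator by Proposition \ref{lemik:2}, $\lam^2-2\gep$ is injective. Two operators whose resolvents coincide are equal, so $\mathfrak A=2\gep$.

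The main obstacle is the first step: identifying the extensions of $\overline C_\eps(t)f$. This requires recasting \eqref{dodatek:1} as reflection-symmetric identities on all of $\R$ and checking that this recasting is really equivalent to \eqref{dodatek:1}, including the bookkeeping that replaces $f_i(x+\eps)$ by $\fir(x+\eps)$.
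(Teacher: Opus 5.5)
Your proof is correct, but it takes a different route from the paper's.

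\textbf{What the paper does.} The paper never verifies the functional equation $2\overline C_\varepsilon(t)\overline C_\varepsilon(s)=\overline C_\varepsilon(t+s)+\overline C_\varepsilon(t-s)$. It uses only strong continuity and the bound \eqref{dodatek:9}. It computes the Laplace transform $h$ of $\overline C_\varepsilon(\cdot)f$ piecewise, as the basic-family resolvents of $f_i^{\mathpzc l}$ and $f_i^{\mathpzc r}$. It then checks that $h\in\dom{\mathfrak G_\varepsilon}$ with $\lambda^2h-2\mathfrak G_\varepsilon h=f$. Finally it invokes Proposition 3.14.4 of \cite{abhn}. That result says a strongly continuous, exponentially bounded family whose Laplace transform equals $\lambda(\lambda^2-A)^{-1}$ is automatically a cosine family with generator $A$.

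\textbf{What you do instead.} You establish the functional equation by hand, by showing that the extension map intertwines with $C(t)$. You recast \eqref{dodatek:1} as three symmetry statements on all of $\mathbb R$:
\begin{itemize}
\item the identity $f_i^{\mathpzc l}(-x)=f_j^{\mathpzc l}(x)$;
\item oddness of $f_i^{\mathpzc r}-f_i^{\mathpzc l}$ about $\varepsilon$;
\item evenness of $p_{\varepsilon,i}f_i^{\mathpzc r}-(1-p_{\varepsilon,i})f_i^{\mathpzc l}$ about $\varepsilon$.
\end{itemize}
Each of these is preserved by $C(t)$, and you then appeal to uniqueness in Lemma \ref{lem:d1}. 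After that you need only the easy direction: the Laplace transform of a cosine family gives the resolvent of its generator. The standard argument then finishes: $\mathfrak A\subset 2\mathfrak G_\varepsilon$ together with injectivity of $\lambda^2-2\mathfrak G_\varepsilon$.

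\textbf{Comparison.} Your route is more self-contained. It also yields a structural fact the paper does not record: the extensions of $\overline C_\varepsilon(t)f$ are exactly $C(t)f_i^{\mathpzc l}$ and $C(t)f_i^{\mathpzc r}$. Your parity formulation gives a transparent derivation of the conditions at $0$ and at $\varepsilon$, obtained by differentiating the symmetric identities at $x=0$. The paper instead cites coincidence of Laplace transforms for these, and the differential equation plus continuity for $h''(\varepsilon-)=h''(\varepsilon+)$. The paper's route is shorter, because the cited proposition makes your first step redundant. Both arguments rely on injectivity of $\lambda^2-2\mathfrak G_\varepsilon$, which comes from Proposition \ref{lemik:2}. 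The paper uses this only implicitly, when it writes $(\lambda^2-2\mathfrak G_\varepsilon)^{-1}$.
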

\begin{proof} Let $f=(f_1,f_2)\in \coss$ be fixed, and let $\fil\in \cer$ and $\fir \in \cer$ be the related extensions. Also, for $\lam >0$, let 
\[\hil  \coloneqq \lam^{-1} \int_0^\infty \e^{-\lam t} C(t)\fil  \ud t \mqquad{and} \hir  \coloneqq \lam^{-1} \int_0^\infty \e^{-\lam t} C(t)\fir  \ud t. \]
As we know from Section \ref{sec:cf}, $\hil$ and $\hir$ belong to the domain of $\Delta$, and $\lam^2 \hil - \Delta \hil = \fil $ and $\lam^2 \hir - \Delta \hir = \fir $. 

Next, let 
\[ h=(h_1,h_2) \coloneqq \lam^{-1} \int_0^\infty \e^{-\lam t} \overline C_\eps (t) f  \ud t ; \]
the right-hand side is well-defined, because $\{\overline C_\eps (t), t \in \R\}$ is a strongly continuous family of equibounded operators. A routine argument establishes that 
\[ h_i (x) = \hil (x) [0\le x\le \eps] + \hir(x) [ x> \eps], \qquad i =1,2,\]
and this implies $\lam^2 h_i(x) - h_i''(x) = f_i(x), i =1,2$ for all $x\ge 0$ save perhaps $x=\eps$. But $h_i$ is a continuous function and so is $f_i$; hence, $h_i''(\eps-) = h_i''(\eps+)$. Next, the second line in \eqref{dodatek:1} implies that $\pepi h_i' (\eps+) = (1-\pepi) h_i'(\eps-), i =1,2$. Indeed, as a straightforward calculation shows, the last relation comes down to the fact that the Laplace transforms of the functions featured on both sides of the second line in \eqref{dodatek:1} coincide. Similarly, the first line in \eqref{dodatek:1} implies that $\frac 12 h_1'(0) + \frac 12 h_2'(0)=0$. It follows that $h$ belongs to the domain of $\gep$, and the differential equations established above turn into $\lam^2 h - 2\gep h = f$.

Considering that $f$ was chosen arbitrarily, we have proved that  
\[ \lam (\lam^2 - 2\gep)^{-1} f = \int_0^\infty \e^{-\lam t} \overline C_\eps (t) f  \ud t, \qquad f \in \coss, \lam >0.  \]
This relation, however, implies (see \cite[Proposition 3.14.4]{abhn}) that  $\{\overline C_\eps (t), t \in \R\}$ is a cosine family and that $2 \gep$ is its generator, as desired. 
\end{proof}

Finally, Proposition \ref{generowanie} implies  Theorem \ref{dodatek} (a). Indeed, since $2\gep$ is the generator of $\{\overline C_\eps (t), t \in \R\}$, $\gep$ is the generator of  $\{C_\eps (t), t \in \R\}$
where $C_\eps (t) \coloneqq  \overline C_\eps (\frac {\sqrt 2}2 t), t \in \R$. It is worth noting, however, that Proposition \ref{generowanie} says more than  Theorem \ref{dodatek} (a): it provides a semi-explicit form of the cosine family in question.  This form will be crucially used in the proof of Theorem \ref{dodatek} (b). 

\subsection{Proof of part (b)} 
So far, we have suppressed notationally the fact that extensions $\fil$ and $\fir$ depend on $\eps$ and so do $\gir, \gilm$ and $\gilp$. Since this section is devoted to convergence as $\eps \to 0$, we cannot continue in this fashion: from now on, we replace $\fil$ and $\fir$ with $\file$ and $\fire$, respectively; similarly, we write $\gire, \gilme$ and $\gilpe$ instead of $\gir, \gilm$ and $\gilp$. In particular, \eqref{dodatek:8} takes the form \begin{equation}\label{dod:1} (\overline C_\eps (t)f)_i (x) = C(t)\file (x) [0\le x\le \eps] +  C(t)\fire (x) [ x> \eps], \quad x\ge 0.   
\end{equation}

Before continuing, we recall the following well-known extension of the Banach Fixed Point Theorem (see e.g. \cites{zmarkusem2b}).  Suppose $(X,d)$ is a complete metric space, $q_0\in (0,1)$ is a given number,  and  $U_\eps , \eps >0$ is a family of maps such that 
\[ d(U_\eps \mathbf x, U_\eps \mathbf  y) \le q_0 d( \mathbf x, \mathbf y ), \qquad \mathbf x, \mathbf y \in X\]
and for each $\mathbf x\in X$, the limit $U\mathbf  x\coloneqq \grae U_\eps \mathbf x $ exists. Then, $U$ is a contraction with constant $q_0$, and its unique fixed point is the limit of the fixed points of $U_\eps$s. 

Since for each $i=1,2$, $\gilpe, \eps >0$ are fixed points of the maps $O_{\eps,i}, \eps>0$ defined in \eqref{dodatek:7a}, the result spoken of above seems to be a natural tool for proving convergence of $\gilpe, \eps >0$. However, in applying it we encounter a somewhat unexpected difficulty: the spaces where $O_{\eps,i}$ are defined vary with $\eps$. In particular,  $O_{\eps,i}g, g\in \cerp$, is rarely defined for all sufficiently small $\eps$, and thus it makes no sense to speak of $\grae O_{\eps,i}g$. Fortunately, our case is not hopeless as all the spaces involved are subsets of $C[0,\infty]$ and there is a natural way of `shifting' elements from one space to another. This gives room for a straightforward modification of the proof of the convergence result spoken of above, a modification that  renders convergence of $\gilpe, \eps >0$. Details are contained in the following lemma (see the proof of (ii)  in particular). 

\begin{lemma}\label{acik} Let $f=(f_1,f_2)\in \coss$ and $i=1,2$ be fixed. Let, as before, $X_{\eps,i}, \eps\ge 0$ be the set of functions $g\in C[0,\infty]$ such that $g(0)=f_i(\eps)$. \begin{itemize} \item [(i) ]
For $g\in \cerp$, let $\mathpzc c_\eps = \mathpzc c_\eps (g) \coloneqq f_i(\eps) - g(0), \eps \ge 0$. Then $g+\mathpzc c_\eps $ belongs to $X_{\eps,i}$ and 
\( \grae O_{\eps,i} (g+\mathpzc c_\eps) = O_i (g) \) 
where 
\[ O_i(g) \coloneqq 2p_i f_i + 2p_j (1-2p_i)f_j + q (g+c_0), \quad j=3-i. \]
\item [(ii) ] As restricted to $X_{0,i}$, $O_i$ is a contraction with constant $|q|<1$, and thus has a unique fixed point, say, $\gilpz$. Moreover, 
$\grae \gilpe =\gilpz$.  
\item [(iii) ] The limits $\filz \coloneqq \grae \file $ and  $\firz \coloneqq \grae \fire $ exist and are given as follows 
\end{itemize} 
\begin{align} \filz (x)&  \coloneqq \begin{cases} \frac {p_i}{p_1 + p_2 - 2p_1 p_2} f_i (x) + \frac {p_j - 2p_1p_2}{p_1 + p_2 - 2p_1 p_2} f_j(x), & x\ge 0, \\
 \frac {p_j}{p_1 + p_2 - 2p_1 p_2} f_j (-x) + \frac {p_i - 2p_1p_2}{p_1 + p_2 - 2p_1 p_2} f_i(-x), & x<0, \end{cases}\nonumber \\
 \firz (x) & \coloneqq \begin{cases} f_i(x) & x\ge 0, \\
 \frac {p_i - p_j}{p_1 + p_2 - 2p_1 p_2} f_i (-x) + \frac {2p_j - 2p_1p_2}{p_1 + p_2 - 2p_1 p_2} f_j(-x), & x<0, \end{cases}\label{dod:2} \end{align}
  \( i =1,2, j=3-i. \)

\end{lemma}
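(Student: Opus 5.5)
The plan is to prove the three items in order, using throughout that $f_1,f_2$ and any fixed $g\in\cerp$ are uniformly continuous on $[0,\infty)$ and have limits at $\infty$. The technical core is one observation about concatenations.

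\emph{Concatenation fact.} Suppose $k\in C[0,a_\eps]$ with $a_\eps\to0$ and $\sup_{x\le a_\eps}|k(x)-f(\zero)|\to0$, and suppose $h_\eps\to h$ in $\cerp$. Then $kh_\eps$, whenever it is defined, converges to $h$ in $\cerp$. To see this, on $[0,a_\eps]$ the difference $kh_\eps-h$ is controlled by the oscillation of $f$ near $\zero$ plus that of $h$ near $0$, and $h_\eps(0)\to h(0)$. On $(a_\eps,\infty)$ we have $|h_\eps(x-a_\eps)-h(x)|\le\|h_\eps-h\|+|h(x-a_\eps)-h(x)|$, and the last term tends to $0$ uniformly by uniform continuity of $h$.

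\emph{Item (i).} That $g+\mathpzc c_\eps\in X_{\eps,i}$ is immediate. In \eqref{dodatek:7a} the coefficients $2\pepi$, $2p_{\eps,j}(1-2\pepi)$ and $q_\eps$ converge to $2p_i$, $2p_j(1-2p_i)$ and $q$. Uniform continuity gives $T_\eps f_i\to f_i$. The concatenation fact, applied with $k=k_{ij}$ and $a_\eps=2\eps$ (the functions $\hat k_i$ and $k_j$ are pieces of $f$ within distance $\eps$ of $\zero$), gives $k_{ij}T_\eps f_j\to f_j$. Applied with $k=\overline{k_{ij}}$ and $a_\eps=4\eps$, it gives $\overline{k_{ij}}(g+\mathpzc c_\eps)\to g+\mathpzc c_0$, since $\mathpzc c_\eps\to\mathpzc c_0$. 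Together these yield (i).

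\emph{Item (ii).} For $g\in X_{0,i}$ we have $O_i(g)(0)=\bigl(2p_i+2p_j(1-2p_i)+q\bigr)f_i(0)=f_i(0)$, because the three coefficients sum to $1$. Hence $O_i$ maps $X_{0,i}$ into itself, and on this set $\mathpzc c_0=0$, so $O_i$ is affine with linear part $q$ and is a contraction with constant $|q|<1$. This is the step I expect to be the main obstacle, since the spaces $X_{\eps,i}$ move with $\eps$. I will handle it by shifting $\gilpz$ into $X_{\eps,i}$. Put $\mathpzc c'_\eps\coloneqq f_i(\eps)-f_i(0)\to0$ and fix $q_0\in(|q|,1)$; for small $\eps$ we then have $|q_\eps|\le q_0$. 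Since $\gilpe=O_{\eps,i}(\gilpe)$ and $\gilpz=O_i(\gilpz)$,
\begin{align*}
\|\gilpe-\gilpz\| &\le \|O_{\eps,i}(\gilpe)-O_{\eps,i}(\gilpz+\mathpzc c'_\eps)\| + \|O_{\eps,i}(\gilpz+\mathpzc c'_\eps)-O_i(\gilpz)\|\\
&\le q_0\bigl(\|\gilpe-\gilpz\|+|\mathpzc c'_\eps|\bigr)+o(1),
\end{align*}
where the $o(1)$ comes from (i) applied to $g=\gilpz$. Hence $\|\gilpe-\gilpz\|\le (1-q_0)^{-1}\bigl(q_0|\mathpzc c'_\eps|+o(1)\bigr)\to0$.

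\emph{Item (iii).} Solving the affine fixed-point equation explicitly gives
\[
\gilpz=\frac{2p_if_i+2p_j(1-2p_i)f_j}{1-q}, \qquad 1-q=2(p_1+p_2-2p_1p_2),
\]
and the coefficients are exactly those in the $x\ge0$ part of $\filz$. Next, by \eqref{dodatek:4} and the concatenation fact, $\gilme=k_j\gjlp$ with $\gjlp\to g^{\mathpzc l}_{0,j,+}$, so $\gilme\to g^{\mathpzc l}_{0,j,+}$. From the second line of \eqref{dodatek:5},
\[
\gire\to -f_i+g^{\mathpzc l}_{0,j,+}+\gilpz .
\]
Collecting the coefficients gives $\frac{p_i-p_j}{D}$ in front of $f_i$ and $\frac{2p_j-2p_1p_2}{D}$ in front of $f_j$, where $D=p_1+p_2-2p_1p_2$; this matches \eqref{dod:2}. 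Finally, $\file$ is assembled from $f_i|_{[0,\eps]}$, from $\gilpe(\cdot-\eps)$ on $(\eps,\infty)$ and from $\gilme(-\cdot)$ on $(-\infty,0)$. Similarly, $\fire$ equals $f_i$ on $[\eps,\infty)$ and $\gire(\eps-\cdot)$ on $(-\infty,\eps)$. The same uniform-continuity argument as in the concatenation fact shows that these converge in $\cer$ to $\filz$ and $\firz$. The only extra check is that the limit pieces agree at $0$ with value $f_i(0)$, which holds because all coefficient sums equal $1$.
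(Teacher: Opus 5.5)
Your proposal is correct and follows the paper's proof essentially step for step. You use the same three-term convergence of $O_{\eps,i}(g+\mathpzc c_\eps)$ via uniform continuity near $\zero$, the same shifted fixed-point estimate with $\mathpzc c_\eps(\gilpz)=f_i(\eps)-f_i(0)$ and a uniform bound $q_0$ on $|q_\eps|$, and the same explicit solution of the affine fixed-point equation in (iii).

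The only differences are organizational. You isolate the concatenation estimate as a standalone fact, and you explicitly check that $O_i$ maps $X_{0,i}$ into itself (the coefficients sum to $1$). The paper leaves both points implicit, writing ``arguing as above'' for the former.
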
 

\begin{proof} We begin by noting that $k_{ij}$ and $\overline {k_{ij}}$ featured in \eqref{dodatek:7a} belong to $C[0,2\eps]$ and $C[0,4\eps]$, respectively, and in particular, despite appearances caused by shortened notation, vary with $\eps$.    

{\bf Proof of (i)} Since the semigroup of translations to the left is strongly continuous in $\cerp$, the first term on the right of \eqref{dodatek:7a} converges, as $\eps \to 0$, to $2p_if_i$. By the same token, \[ \max_{x> 2 \eps} |T_\eps f_j(x) - f_j(x)|\le \|T_\eps f_j - f_j\| \to 0, \text{ as } \eps \to 0.\] Moreover, since $f_1(0)=f_2(0)$, given $\delta >0$ one can choose $\eps_0$ so that $|k_{ij} (x) - k_{ij} (2\eps)|\le \frac \delta 2 $ for $x\in [0,2\eps]$,  $\eps \le\eps_0$. Continuity of $f_j$ at $0$ implies at the same time that by making $\eps_0$ smaller if necessary we can guarantee that $|f_j(x) -f_j(\eps)| \le \frac \delta 2$ for $x\in [0,2\eps],$ $\eps \le \eps_0$. Because of $k_{ij} (2\eps )=f_j(\eps)$ it follows that $\max_{0\le x\le 2\eps} | k_{ij} (x) - f_j(x)| \le \delta$ for $\eps \le \eps_0$. This, when combined with the display above shows that the second term on the right of \eqref{dodatek:7a} converges to $2p_j (1-2p_i)f_j$, $\delta>0$ being chosen arbitrarily. 

Similarly, we show that $\overline { k_{ij}} (g+\mathpzc c_\eps)$ converges to $g+c_0$. For, %for $x\ge 4\eps $,
 \begin{align*} \max_{x\ge 4\eps} | \overline {k_{ij}} (g+\mathpzc c_\eps)(x) - g(x) - c_0| & = \max_{x\ge 4\eps} |g(x-4\eps) -g(x) +\mathpzc c_\eps -c_0|\\
 & \le \max_{x\ge 0} |g(x)-g(x+4\eps)| + |f_i(\eps) - f_i(0)|
  \end{align*} 
converges to $0$ as $\eps \to 0$, because $g$ is uniformly continuous. At the same time, given $\delta >0$, one can find $\eps_0$ so small that, as long as $\eps \le \eps_0$, 
\begin{itemize}
\item [(a) ] $|\overline{k_{ij}} (x) - \overline {k_{ij}}(0)| \le \frac \delta 3, x \in [0,4\eps]$,
\item [(b) ] $|g(x) - g(0)| \le \frac \delta 3, x \in [0,4\eps]$, and
\item [(c) ] $|f_i(\eps) - f_i(0)| \le \frac \delta 3.$ 
\end{itemize}
Therefore, $\max_{0\le x \le 4\eps} | \overline {k_{ij}} (g+\mathpzc c_\eps)(x) - g(x) - c_0|\le \delta$, and the claim follows as in the previous case. This completes the proof of (i). 

{\bf Proof of (ii)} For $g\in X_{0,i}$, the third term in the definition of $O_i$ reduces to $qg,$ and thus the first part of (ii) immediately follows.

Moreover, by assumption, there is a constant $q_0\in (0,1)$ such that for all sufficiently small $\eps$, $|q_\eps| \le q_0$. Since $\gilpe$ is a fixed point of $O_{\eps,i}$, for such $\eps$ we may write
\begin{align*}
\|\gilpz - \gilpe \| & \le \|\gilpz - O_{\eps,i} (\gilpz +\mathpzc c_\eps)\| \\& \phantom{=} + \| O_{\eps,i} (\gilpz +\mathpzc c_\eps)- O_{\eps,i} (\gilpe) \| \\ & \le \|\gilpz - O_{\eps,i} (\gilpz +\mathpzc c_\eps)\| + q_0 \| \gilpz +\mathpzc c_\eps- \gilpe \| \\ &\le 
 \|\gilpz - O_{\eps,i} (\gilpz +\mathpzc c_\eps)\| + q_0 |\mathpzc c_\eps| + q_0 \| \gilpz - \gilpe \|, 
\end{align*}
where $\mathpzc c_\eps = \mathpzc c_\eps (\gilpz)$. Hence, 
\begin{align*}
\|\gilpz - \gilpe \| & \le \tfrac 1{1-q_0} \|\gilpz - O_{\eps,i} (\gilpz +\mathpzc c_\eps)\| + \tfrac{q_0}{1-q_0} |\mathpzc c_\eps|. 
\end{align*}
The first term on the right hand side here converges to $0$ by (i), and so does the second because $\gilpz$ belongs to $X_{0,i}$. 
This completes the proof of (ii).  

{\bf Proof of (iii)} The fixed point $\gilpz$ of $O_{i}$ can be calculated explicitly: 
\[ \gilpz = \tfrac {p_i}{p_1 + p_2 - 2p_1 p_2} f_i + \tfrac {p_j - 2p_1p_2}{p_1 + p_2 - 2p_1 p_2} f_j, \qquad i =1,2, j=3-i. \]
Also, using (ii) and \eqref{dodatek:4} (recall that $\gilm$ and $\gilp$ featured there are really $\gilme $ and $\gilpe$, respectively), and arguing as above we see that $\gilmz \coloneqq \grae \gilme = g_{0,j,+}^{\mathpzc l}$. A similar argument shows then that $\filz \coloneqq \grae \file $ exists and establishes the first part of \eqref{dod:2}. The second part of \eqref{dodatek:5} yields therefore 
\( \girz \coloneqq  \grae \gire = - f_i + g_{0,j,+}^{\mathpzc l} + \gilpz .\) This leads, as above, to the rest of the assertion. %\eqref{dodatek:5}. 
\end{proof}
 
We are finally in a position to complete the proof of Theorem \ref{dodatek}(b).

\begin{proof}[Proof of Theorem \ref{dodatek} \emph{(b)}] We claim that, see \eqref{dod:1}, for each $i=1,2$
\begin{equation}\label{dod:3} \grae \sup_{t \in \R} \|(\overline C_\eps (t) f)_i - C(t) \firz \| = 0, \qquad f \in \coss.  \end{equation}
Interestingly, despite the fact that the limit does not involve $\filz$,  the proof uses the following relation between $\filz$ and $\firz$, a direct consequence of \eqref{dod:2}: 
\begin{equation}\label{dod:4} \filz (t) + \filz(-t) = \firz (t) + \firz (-t), \qquad t \in \R. \end{equation}

Here are the details. For $x>\eps$ and $t \in \R$,
\( (\overline C_\eps (t) f)_i (x) - C(t)\firz (x) = C(t) \fire (x) - C(t)\firz (x). \) Hence, the supremum over such $x$ of the absolute value of the left-hand side does not exceed $\|C(t)\fire - C(t)\firz\|\le \|\fire - \firz\|$, and this converges to $0$, as $\eps \to 0$ (uniformly in $t\in \R$) by Lemma \ref{acik} (iii). 

Next, for $x\in [0,\eps]$, 
\[ (\overline C_\eps (t) f)_i (x) - C(t)\firz (x) = \pol (\file (x+t) +\file (x-t) - \firz (x+t) - \firz (x-t)). \]
Let $\delta >0$ be given. By Lemma \ref{acik} (iii), there is an $\eps_0$ so small that $\|\filz - \file\| \le \frac \delta 3$, whenever $\eps \le \eps_0$.  
Also, since $\filz$ and $\firz$, being members of $\cer$, are uniformly continuous, by making $\eps_0$ smaller if necessary, we can make sure that 
\begin{align*}
| \filz (x+t) + \filz (x-t) - \filz (t) - \filz (-t) |& \le \tfrac \delta 3,   \\
| \firz (x+t) + \firz (x-t) - \firz (t) - \firz (-t) |& \le \tfrac \delta 3, \qquad x\in [0,\eps], t\in \R. \end{align*}
By \eqref{dod:4}, it follows that, for $\eps \le \eps_0$,  $\max_{x\in [0,\eps]} | (\overline C_\eps (t) f)_i (x) - C(t)\firz (x)| \le \delta$. Since $\delta >0$ is arbitrary, we see that  $\grae  \sup_{t\in \R} \max_{x\in [0,\eps]} | (\overline C_\eps (t) f)_i (x) - C(t)\firz (x)|=0$. This completes the proof of \eqref{dod:3}.

Finally, in terms of $\wt \alpha_i, i =1,2$ introduced in \eqref{wojoj}, the second line of the definition of $\firz$ can be written as $2\wt \alpha_i f_i(-x) + 2\wt \alpha_j f_j(-x) - f_i (-x)$, $j=3-i$. This allows recognizing $(\firz )_{i=1,2}\in (\cer)^2$ as the (unique) extension of $f\in \coss$ shaped by skew Brownian motion's transmission conditions with parameters $\wt \alpha_i, i =1,2$, see \eqref{ext_skew}. 
In other words, the limit cosine family of $\overline C_\eps$ is generated by  $2\mathfrak H_{\wt \alpha,0}$.   The remark made at the end of Section \ref{sec:popa} completes the entire proof.  \end{proof}

 \section*{Acknowledgements} A.Pilipenko was  partially supported by the Swiss National Science
Foundation, grants No. IZRIZ0\_226875, No. 200020\_214819, No. 10009199, No. 200020\_192129.

%\bibliographystyle{plain}
%\bibliography{bibliografiakopia}
%\bibliography{../../../../bibliografia}

\begin{bibdiv}
\begin{biblist}

\bib{abhn}{book}{
      author={Arendt, W.},
      author={Batty, C. J.~K.},
      author={Hieber, M.},
      author={Neubrander, F.},
       title={Vector-{V}alued {L}aplace {T}ransforms and {C}auchy {P}roblems},
   publisher={Birkh{\"a}user},
     address={Basel},
        date={2001},
}

\bib{bass}{book}{
      author={Bass, R.~F.},
       title={Stochastic processes},
      series={Cambridge Series in Statistical and Probabilistic Mathematics},
   publisher={Cambridge University Press, Cambridge},
        date={2011},
      volume={33},
        ISBN={978-1-107-00800-7},
         url={https://doi.org/10.1017/CBO9780511997044},
}

\bib{laplace}{article}{
      author={Bobrowski, A.},
       title={The {W}idder--{A}rendt theorem on inverting of the {L}aplace
  transform, and its relationships with the theory of semigroups of operators},
        date={1997},
     journal={Methods Funct. Anal. Topology},
      volume={3},
      number={4},
       pages={1\ndash 39},
}

\bib{kosinusy}{article}{
      author={Bobrowski, A.},
       title={Generation of cosine families via {L}ord {K}elvin's method of
  images},
        date={2010},
     journal={J. Evol. Equ.},
      volume={10},
      number={3},
       pages={663\ndash 675},
}

\bib{tombatty}{incollection}{
      author={Bobrowski, A.},
       title={Families of operators describing diffusion through permeable
  membranes},
        date={2015},
   booktitle={Operator semigroups meet complex analysis, harmonic analysis and
  mathematical physics\textrm , {A}rendt, {W}., {C}hill, {R}., {T}omilov,~{Y}.,
  eds.},
      series={Operator Theory, Advances and Applications},
      volume={250},
   publisher={Birkh{\"a}u\-ser},
       pages={87\ndash 105},
}

\bib{knigazcup}{book}{
      author={Bobrowski, A.},
       title={Convergence of {O}ne-{P}arameter {O}perator {S}emigroups. {I}n
  {M}odels of {M}athematical {B}iology and {E}lsewhere},
   publisher={Cambridge University Press, Cambridge},
        date={2016},
}

\bib{zwojtkiem}{article}{
      author={Bobrowski, A.},
      author={Chojnacki, W.},
       title={Cosine families and semigroups really differ},
        date={2013},
        ISSN={1424-3199},
     journal={J. Evol. Equ.},
      volume={13},
      number={4},
       pages={897\ndash 916},
         url={http://dx.doi.org/10.1007/s00028-013-0208-0},
}

\bib{abtk}{article}{
      author={Bobrowski, A.},
      author={Komorowski, T.},
       title={Diffusion approximation for a simple kinetic model with
  asymmetric interface},
        date={2022},
     journal={J. Evol. Equ.},
      volume={22:42},
}

\bib{zmarkusem2b}{article}{
      author={Bobrowski, A.},
      author={Kunze, M.},
       title={Irregular convergence of mild solutions of semilinear equations},
        date={2019},
        ISSN={0022-247X},
     journal={J. Math. Anal. Appl.},
      volume={472},
      number={2},
       pages={1401\ndash 1419},
         url={https://doi.org/10.1016/j.jmaa.2018.11.082},
}

\bib{abap}{misc}{
      author={Bobrowski, A.},
      author={Pilipenko, A.},
       title={Analytic and stochastic description of {B}rownian motions on star
  graphs},
        date={2025},
         url={https://arxiv.org/abs/2511.20039},
        note={arXiv 2511.20039},
}

\bib{onportenko}{misc}{
      author={Bobrowski, A.},
      author={Pilipenko, A.},
       title={On {P}ortenko's approximation of skew {B}rownian motion},
        date={2026, To appear in SIAM Journal on Mathematical Analysis, see
  also arXiv 2601.20440},
         url={https://arxiv.org/abs/2601.20440},
}

\bib{bobrat}{article}{
      author={Bobrowski, A.},
      author={Ratajczyk, E.},
       title={{On one-parameter continuous family of pairs of complementary
  boundary conditions}},
        date={2022},
     journal={Studia Mathematica},
      volume={266 (1)},
       pages={81\ndash 92},
}

\bib{aberman}{article}{
      author={Bobrowski, A.},
      author={Ratajczyk, E.},
       title={Pairs of complementary transmission conditions for {B}rownian
  motion},
        date={2024},
        ISSN={0025-5831,1432-1807},
     journal={Math. Ann.},
      volume={388},
      number={4},
       pages={4317\ndash 4342},
         url={https://doi.org/10.1007/s00208-023-02613-x},
}

\bib{abernachr}{article}{
      author={Bobrowski, A.},
      author={Ratajczyk, E.},
       title={Approximation of skew {B}rownian motion by snapping-out
  {B}rownian motions},
        date={2025},
     journal={Mathematische Nachrichten},
      volume={298},
      number={3},
       pages={829\ndash 848},
      eprint={https://onlinelibrary.wiley.com/doi/pdf/10.1002/mana.202400179},
         url={https://onlinelibrary.wiley.com/doi/abs/10.1002/mana.202400179},
}

\bib{fromsotows}{misc}{
      author={Bobrowski, A.},
      author={Ratajczyk, E.},
       title={From snapping out {B}rownian motions to {W}alsh's spider
  processes on star-like graphs},
        date={2025},
         url={https://arxiv.org/abs/2406.16800},
        note={To appear in {D}ocumenta {M}athematica, arXiv 2406.16800},
}

\bib{ethier}{book}{
      author={Ethier, S.~N.},
      author={Kurtz, T.~G.},
       title={{M}arkov {P}rocesses. {C}haracterization and {C}onvergence},
   publisher={Wiley},
     address={New York},
        date={1986},
}

\bib{goldi}{article}{
      author={Goldstein, J.~A.},
       title={On the convergence and approximation of cosine functions},
        date={1974},
     journal={Aequationes Math.},
      volume={11},
       pages={201\ndash 205},
}

\bib{goldstein}{book}{
      author={Goldstein, J.~A.},
       title={Semigroups of {L}inear {O}perators and {A}pplications},
   publisher={Oxford University Press},
     address={New York},
        date={1985},
}

\bib{bookandrey}{book}{
      author={Iksanov, A.},
      author={Marynych, A.},
      author={Pilipenko, A.},
      author={Samoilenko, I.},
       title={Locally {P}erturbed {R}andom {W}alks},
   publisher={Springer},
        date={2025},
}

\bib{ito}{book}{
      author={It\^{o}, K.},
      author={McKean{,}~Jr., H.~P.},
       title={Diffusion {P}rocesses and {T}heir {S}ample {P}aths},
   publisher={Springer},
     address={Berlin},
        date={1996},
        note={Repr. of the 1974 ed.},
}

\bib{jachymski}{article}{
      author={Jachymski, J.},
      author={J\'o\'zwik, I.},
      author={Terepeta, M.},
       title={The {B}anach {F}ixed {P}oint {T}heorem: selected topics from its
  hundred-year history},
        date={2024},
     journal={Rev. Real Acad. Cienc. Exactas Fis. Nat. Ser. A-Mat.},
      volume={118},
         url={https://doi.org/10.1007/s13398-024-01636-6},
}

\bib{kallenbergnew}{book}{
      author={Kallenberg, O.},
       title={Foundations of {M}odern {P}robability},
     edition={2},
   publisher={Springer},
        date={2002},
}

\bib{konishi}{article}{
      author={Konishi, Y.},
       title={Cosine functions of operators in locally convex spaces},
        date={1971/1972},
     journal={J. Fac. Sci. Univ. Tokyo Sect. IA Math.},
      volume={18},
       pages={443\ndash 463},
}

\bib{koralov2026}{misc}{
      author={Koralov, L.},
      author={Tao, X.},
       title={A criterion for convergence of diffusion processes on graphs},
        date={2026, ArXiv 2609.22520},
         url={https://arxiv.org/abs/2609.22520},
}

\bib{kostrykin2012}{article}{
      author={Kostrykin, V.},
      author={Potthoff, J.},
      author={Schrader, R.},
       title={Brownian motions on metric graphs},
        date={2012Sep},
        ISSN={1089-7658},
     journal={Journal of Mathematical Physics},
      volume={53},
      number={9},
       pages={095206},
         url={http://dx.doi.org/10.1063/1.4714661},
}

\bib{lejayskew}{article}{
      author={Lejay, A.},
       title={On the constructions of the skew {B}rownian motion},
        date={2006},
     journal={Probab. Surv.},
      volume={3},
       pages={413\ndash 466},
         url={https://doi.org/10.1214/154957807000000013},
}

\bib{makhnomem}{article}{
      author={Makhno, S.~Y.},
       title={One-dimensional stochastic equations in layered media with
  semi-permeable barriers},
        date={2016},
        ISSN={0926-6364,1569-397X},
     journal={Random Oper. Stoch. Equ.},
      volume={24},
      number={3},
       pages={165\ndash 171},
         url={https://doi.org/10.1515/rose-2016-0013},
      review={\MR{3545916}},
}

\bib{rosen}{book}{
      author={Marcus, M.~B.},
      author={Rosen, J.},
       title={Markov {P}rocesses, {G}aussian {P}rocesses, and {L}ocal {T}imes},
      series={Cambridge Studies in Advanced Mathematics},
   publisher={Cambridge University Press},
        date={2006},
}

\bib{withvillmart}{misc}{
      author={Pilipenko, A.},
      author={Vilmart, G.},
       title={Limit theorems for one-dimensional diffusions: reflected, skew,
  sticky, skew-sticky, and snapping-out limits},
         how={in preparation},
	note={Arxiv},
        date={2026},
}

\bib{portenko}{incollection}{
      author={Portenko, N.~I.},
       title={Generalized diffusion processes},
        date={1976},
   booktitle={Proceedings of the {T}hird {J}apan-{USSR} {S}ymposium on
  {P}robability {T}heory ({T}ashkent, 1975), {Lecture Notes in Math. Vol.
  550}},
      series={Lecture Notes in Math.},
      volume={Vol. 550},
   publisher={Springer, Berlin-New York},
       pages={500\ndash 523},
}

\bib{portenkokniga}{book}{
      author={Portenko, N.~I.},
       title={Generalized {D}iffusion {P}rocesses},
      series={Translations of Mathematical Monographs},
   publisher={American Mathematical Society},
        date={1990},
      volume={83},
}

\bib{dapratos}{article}{
      author={Prato, G.~Da},
      author={Sinestrari, E.},
       title={Differential operators with nondense domain},
        date={1987},
     journal={Ann. Scuola Norm. Sup. Pisa Cl. Sci. {\rm (4)}},
      volume={14},
       pages={285\ndash 344},
}

\bib{rogers}{book}{
      author={Rogers, L. C.~G.},
      author={Williams, D.},
       title={Diffusions, {M}arkov {P}rocesses and {M}artingales, {V}ol. 1,
  {F}oundations},
   publisher={Cambridge University Press},
     address={Cambridge},
        date={2000},
}

\end{biblist}
\end{bibdiv}

\def\cprime{$'$}\def\cprime{$'$}\def\cprime{$'$}\def\polhk#1{\setbox0=\hbox{#1}{\ooalign{\hidewidth
  \lower1.5ex\hbox{`}\hidewidth\crcr\unhbox0}}}\ifx \cedla \undefined \let
  \cedla = \c \fi\ifx \cyr \undefined \let \cyr = \relax \fi\ifx \cprime
  \undefined \def \cprime {$\mathsurround=0pt '$}\fi\ifx \prime \undefined \def
  \prime {'} \fi\def\polhk#1{\setbox0=\hbox{#1}{\ooalign{\hidewidth
  \lower1.5ex\hbox{`}\hidewidth\crcr\unhbox0}}}
% \bib, bibdiv, biblist are defined by the amsrefs package.

 \end{document}